\newcolumntype{C}{>{$}c<{$}}
\newcolumntype{L}{>{$}l<{$}}
\newcolumntype{R}{>{$}r<{$}}
 \newtheorem{thm}{Theorem}[section]
 \newtheorem{cor}[thm]{Corollary}
 \newtheorem{lem}[thm]{Lemma}
 \newtheorem{prop}[thm]{Proposition}
 \theoremstyle{definition}
 \newtheorem{defn}[thm]{Definition}
 \newtheorem{notation}{Notation}
 \newtheorem{rem}[thm]{Remark}
 \theoremstyle{definition}
 \newtheorem{ex}[thm]{Example}
 \newtheorem{conjecture}[thm]{Conjecture}
\newtheorem{prob*}{Problem}
 \DeclareMathOperator{\Supp}{Supp} 
 \DeclareMathOperator{\Sing}{Sing} 
 \DeclareMathOperator{\length}{length}
 \newcommand{\RR}{\mathbb{R}}
 \newcommand{\CC}{\mathbb{C}}
 \newcommand{\PP}{\mathbb{P}}
\def\move-in{\parshape=1.75true in 5true in}
\newcommand\algorithmicinput{\textbf{Input:}}
\newcommand\INPUT{\item[\algorithmicinput]}
\newcommand\algorithmicoutput{\textbf{Output:}}
\newcommand\OUTPUT{\item[\algorithmicoutput]}
\begin{document}

\title{Singularities of plane rational curves via projections}

\author[A. Bernardi]{Alessandra Bernardi}
\address[Alessandra Bernardi]{Dipartimento di Matematica, Univ. Trento,  Italy}
\email{alessandra.bernardi@unitn.it}

\author[A. Gimigliano]{Alessandro Gimigliano}
\address[Alessandro Gimigliano]{Dipartimento di Matematica e CIRAM,  Univ. Bologna, Italy}
\email{alessandr.gimigliano@unibo.it }

\author[M. Id\`a]{Monica Id\`a}
\address[Monica Id\`a]{Dipartimento di Matematica,  Univ. Bologna, Italy}
\email{monica.ida@unibo.it }


\begin{abstract}
We consider the parameterization ${\mathbf{f}}=(f_0:f_1:f_2)$ of  a plane rational curve $C$ of degree $n$, and we study the singularities of $C$ via such parameterization. We use the projection from the rational normal curve $C_n\subset \PP^n$ to $C$ and its interplay with the secant varieties to $C_n$. In particular, we define via ${\mathbf{f}}$ certain 0-dimensional schemes $X_k\subset \PP^k$, $2\leq k\leq (n-1)$, which encode all information on the singularities of multiplicity $\geq k$ of $C$ (e.g. using $X_2$ we can give a criterion to determine whether $C$ is a cuspidal curve or has only ordinary singularities). We give a series of algorithms which allow one to obtain information about the singularities from such schemes.  \end{abstract}


\maketitle

\section{Introduction}

The study of plane rational curves is quite classical in algebraic geometry, and it is also an interesting subject for applications, for example it is very relevant in Computer Aided Design (CAD). Since rational curves are the ones that can be parameterized, it is quite of interest to get information on the curve from its parameterization (implicit equation, structure of singularities, e.g. see \cite{refSCG, refSWP, refCSC,refCKPU}).
In this paper we tackle  the problem of determining the singularities of a plane rational curve from its parameterization. This is a problem which has been much treated in the literature: see the beautiful work \cite{refCKPU}, where the syzygies of the ideal generated by the polynomials giving the parameterization are used in order to determine the singularities of the curve 
and their structure (multiplicity, branches, infinitely near other singularities).

This idea has been developed also in \cite{refSCG} and \cite{refCWL}, where ``~$\mu$-bases~'' are exploited for the parameterized curve. We used this approach in a previous paper, \cite{BGI3}, in order to find how a plane curve could be viewed as a projection of a rational curve contained in a rational normal scroll.  

\medskip

In the present paper we describe the structure of singular points by using the parameterization, but from a different point of view with respect to the one of \cite{BGI3}. In order to study the singularities of a plane rational curve $C$ of degree $n$, we use  the fact that the parameterization of $C$ defines a projection $\pi : \PP^n  \dashrightarrow \PP^2$, which is generically one-to-one from the rational normal curve $C_n\subset \PP^n$ onto its image, and $\pi(C_n)= C\subset  \PP^2$.  If $P$ is a singular point of multiplicity $m$ of $C\subset \PP^2$, then there is an $(m-1)$-dimensional $m$-secant space $H$ to $C_n$ in $\PP^n$ such that $\pi(H)=P$. The center of projection of $\pi$ is a $(n-3)$-linear space $\Pi$, and $\Pi\cap H$ has to be $(m-2)$-dimensional, in order to have that $\pi (H)$ is a point. We have that  $H\cap C_n$ (and $H\cap \Pi$) contains all the information about the singularity $P$ of $C$ (multiplicity, branches, infinitely near points, e.g. see \cite{MOE}); the problem is how to extract this information from these data.   

Our strategy here is to consider, for $k=2,\ldots ,n-1$, the spaces $\PP^k\cong \PP (K[s,t]_k)$ that parameterize $\sigma_k(C_n)$, the $k$-secant variety of $(k-1)$-dimensional $k$-secant spaces to $C_n$ 
and their intersection with the center of projection $\Pi$, which is determined by the parameterization of $C$. Such study yields to considering certain 0-dimensional schemes, $X_k\subset \PP^k$, which parameterize the $k$-secant $(k-1)$-spaces that get contracted to a point by the projection $\pi$, so that they encode all the information on the singularities of $C$. For example, we can use the scheme $X_2$ to give simple necessary and sufficient conditions for the curve $C$ to be a cuspidal one or to have only ordinary singularities (see Proposition \ref{cuspidal}).

\medskip

This approach stems out from a study of the so-called Poncelet varieties associated to rational curves (see \cite{ISV}); in a previous paper (see \cite{BGI2}) we considered the singularities of Poncelet surfaces in order to determine the existence of triple points on $C$. Here that approach has been generalized and potentially covers all kind of singular points on $C$. The interplay between secant varieties $\sigma_k(C_n)$ of rational normal curves and 0-dimensional subschemes of the space $\PP^k$ parameterizing the $\PP^{k-1}$ $k$-secant spaces of $C_n$ has also been studied by the authors in \cite{BGI1}.  

Our choice has been to give our main results in the form of algorithms that can be used  to study a given rational curve $C$, for example  with  the help of programs as Cocoa \cite{COCOA}, Macauly2 \cite{m2} or Bertini  \cite{Bertini}. Our Algorithm \ref{trueAlg1} allows one to compute the number $N$ of singularities of $C$ 
and also the number $N_k$ of singularities of multiplicity $k$, for $k=2,\ldots ,n-1$.  We also give a variation of Algorithm \ref{trueAlg1}  (c.f. Algorithm \ref{AggiuntaAlg1}) that allows one to compute, for each multiplicity, the number of singular points with given number of branches and multiplicity of each branch. Our Algorithm \ref{Algorithm2True} computes the numbers $N,N_2,\ldots,N_{n-1}$ too, but it also gives the ideal of each subset  $\Sing_k(C)\subset \Sing(C)$ given by the points with multiplicity $k$.  Eventually, Algorithm \ref{Algorithm3True} gives the (maybe approximated) coordinates of the points in $\PP^1$ which, via the parameterization $\mathbf{f}$ of $C$, are the preimages of the singular points of $C$. This allows one  to compute  the coordinates of the singular points of $C$ by applying $\mathbf{f}$ to such points.

Although algorithms determining the structure of plane curves singularities do exist (see \cite{refCWL}, \cite{Pe}, \cite{refCKPU}), we think that our algorithms can be a useful tool, also used together with the existing ones, since their approach to the problem is different, and their behavior on specific curves can be of different effectiveness.

\medskip

The paper is organized as follows: in the next section we give all the preliminary notions and define the schemes $X_k$ which will be our crucial tool to study Sing$(C)$.  In Section 3 we give the algorithms mentioned above. Section 4 is dedicated  to the study of curves with only double points. In this case we give a criterion (Theorem  \ref{2mtom}) to describe, via the projection $\pi|_{C_n}: C_n \to C$, which kind of singularity  a double point can be
. This result is interesting in itself, since usually this description via projection uses the osculating spaces of $C_n$, but that does not work when the multiplicity of the singular point is big with respect to $n$ (e.g. see \cite[Remark 4.5.1]{MOE}).   Then we conjecture that the structure of $X_2$ allows one to recover all the information about the structure of the singular points. Following our conjecture we give an algorithm (Algorithm \ref{Algorithm4}) which may find the structure of the double points; the algorithm will always work if the conjecture is true. All along the paper we work over an algebraically closed field of characteristic zero, except in the last section where we consider $K=\mathbb{R}$, since in this case  our Algorithm \ref{Algorithm3True} can give a method to find acnodes and hidden singularities.    

\section{Preliminaries}\label{Preliminaries}

Let   $K$  an algebraically closed field with char $K$ =0. We study the singularities of a parameterized rational plane curve $C \subset \PP^2=\PP^2_K$ given by a map  ${\mathbf{f}}= (f_0:f_1:f_2)$, where $f_i \in K[s,t]_n$, $n\geq 3$. We will always suppose that our parameterization is {\it proper}, i.e. that ${\mathbf{f}}$ is generically 1:1 and the $f_i$'s, $i=1,2,3$, do not have common zeroes. Our approach will be to view $C$ as the projection of a rational normal curve $C_n\subset \PP^n$ into $\PP^2$, so that the singularities of $C$ will be related to the position of the center of projection with respect to secant (and tangent) lines and osculating spaces to $C_n$ (see e.g. \cite{MOE}); in particular we will concentrate our study mainly on the use of the secant variety $\sigma_2(C_n)$.

Let us start with studying how the varieties $\sigma_k(C_n)$ can be parameterized by a space $\PP^k$, e.g. following  the construction in \cite{ISV}.

Let  $\nu_n: \mathbb{P}^1 \rightarrow \mathbb{P}^n$ be the Veronese $n$-uple embedding and let $C_n = \nu_n(\PP ^1)\subset \PP ^n$ be the rational normal curve in $\PP ^n$. Consider the space $\PP (K[s,t]_k)\cong \PP ^k$, with $2\leq k\leq n-1$; every point in this space corresponds (modulo proportionality) to a polynomial of degree $k$,
and its roots give $k$ points (counted with multiplicity) in $\PP ^1$,
hence one of the $k$-secant $(k-1)$-planes in the  secant variety $$\sigma_k (C_n)=\overline{\bigcup_{P_1,P_2,\ldots,P_k\in C_n}\langle P_1,P_2,\ldots , P_k\rangle} \subset\PP ^n.$$

\smallskip

Notice that only for $k < \left[ \frac{n+1}{2}\right]$ we have $\sigma_k (C_n) \neq \PP^n$; for higher values of $k$ (i.e. for $\left[ \frac{n+1}{2}\right]\leq k \leq n-1$) the secant variety $\sigma_k (C_n)$ fills up $\PP^n$; nonetheless the points of $\PP^k$ still parameterize the $(k-1)$-spaces $k$-secant to $C_n$.

\medskip 

If we consider coordinates $x_0,\ldots ,x_k$ in $\PP ^n$ and coordinates $z_0,\ldots ,z_n$ in $\PP ^n$ then the variety $\sigma_k(C_n)$ can be viewed in the following way: consider $Y\subset \PP ^k\times \PP ^n$ defined by the
equations
\begin{equation}\label{eqn1}
\sum_{i=0}^k x_iz_{i+j} = 0, \; \; j=0, \ldots , n-k.
\end{equation}

We have that the $(n-k+1)\times (n+1)$ matrix of coefficients of (\ref{eqn1}) in the $z_{i+j}$'s
is:

\begin{equation}
\label{Ak}
A_k=  \begin{pmatrix} x_0&x_1&\cdots &x_k&0&0&\cdots&0 \cr
0&x_0&x_1&\cdots&x_k&0&\cdots&0 \cr 
&&\ddots&&&&& \cr
0&\cdots&0&x_0&x_1&\cdots&\cdots&x_k
\end{pmatrix}.
\end{equation}

\medskip
While the $(n-k+1)\times(k+1)$ matrix of  coefficients of (\ref{eqn1})  in the $x_i$'s is the catalecticant matrix:

\medskip
\begin{equation}\label{Bk}B_k=
\begin{pmatrix} 
z_0&\cdots&z_{k} \cr
z_1&\cdots&z_{k+1}\cr
\cdots &\cdots&\cdots
\cr z_{n-k}&\cdots &z_n
\end{pmatrix}.\end{equation}

\medskip

Then if we consider the two projections  $p_1: Y \rightarrow\PP ^k$ and
$p_2: Y \rightarrow\PP ^n$, we get that $p_1$ gives a projective bundle structure on $\PP ^k$, with fibers $\PP ^{k-1}$'s (this is known as the {\it Schwartzenberger bundle} associated to $\sigma_k(C_n)$, see \cite{Sc, ISV}).  When $k < \frac{n-1}{2}$, $p_2(Y) = \sigma_k(C_n)$ (the map $p_2$ is a desingularization of $\sigma_k(C_n)$), while when $k \geq \frac{n-1}{2}$, $\sigma_k(C_n)=\PP^n$ and $p_2$ is surjective.

 Notice that, for $k\leq \frac{n-1}{2}$, for each point $p \in \sigma _{k- i}(C_n) \backslash \sigma_{k-1-i}(C_n)$, with $i=0,1,\ldots , k-1$, the fibers of $p_2$  have dimension equal to $i$ (e.g. see  \cite{ISV}).

Moreover, for all $ P\in  \PP ^k$, we have that $ p_2(p_1^{-1}(P)) $ is a  $(k-1)$-space $k$-secant to $C_n \subset  \PP ^n$, thus showing that  $\PP ^k$ parameterizes the  $(k-1)$-secant $k$-spaces to $C_n\subset \PP^n$. Notice also that the maximal minors of $B_k$, when $k < \frac{n-1}{2}$, define the ideal of $\sigma_k(C_n)\subset \PP^n$.

In particular, when we consider the points in $\PP^k$ that parameterize $k$-osculating $(k-1)$-spaces to $C_n$ (their intersection with $C_n$ has support at one point and it has degree $k$) they are the points of the rational normal curve $\mathcal{C}_k$, that parameterizes forms in $K[s,t]_k$ which are $k$-powers of linear forms: $(as+bt)^k = \sum _{i=0}^k {k \choose i}a^{k-i}b^is^{k-i}t^i$; so when $$(x_0:x_1:\ldots : x_k)=(a^k:ka^{k-1}b:\ldots : {k \choose i}a^{k-i}b^i, \ldots : kab^{k-1}: b^k),$$ for $a,b\in \PP^1$, we get a rational normal curve $\mathcal{C}_k\subset \PP^k$.

Notice that we choose to adopt the notation $(x_0:x_1:\ldots :x_N)$, with colons, for the homogeneous coordinates of a point in $\PP^N$ (other notations, with commas or square brackets, are also common).

Here we are viewing the rational normal curves $\mathcal{C}_k\subset \PP^k$ and $C_n\subset \PP^n$ as the $k$-uple and $n$-uple Veronese embeddings of $\PP^1$ in two different ways: the curve $C_n$ is the image of the map $\nu_n:\ (s,t) \rightarrow (s^n:s^{n-1}t:\ldots :t^n)$, while $\mathcal{C}_k$ is the image of the map that sends the form $as+bt$ to $(as+bt)^k$, hence $(a,b) \rightarrow \left(a^k:\ldots : {k \choose i}a^{k-i}b^i:\ldots : b^k\right)$.  For example, $\mathcal{C}_2$ is the dual curve of the usual rational normal conic $C_2=\{z_0z_2-z_1^2=0\}$, and it has equation $4x_0x_2-x_1^2=0$ (given by the discriminant of the form $x_0s^2+x_1st+x_2t^2$, parameterized by each point $(x_0:x_1:x_2)$).

\medskip
We want to describe explicitly our curve $C\subset \PP^2$ as a projection of $C_n\subset \PP^n$; let us consider  $\langle f_0,f_1,f_2\rangle \subset
K[s,t]_n$, with $f_u = a_{u0}s^n + a_{u1}s^{n-1}t + \cdots + a_{un}t^n$, $u=0,1,2$; when we associate our coordinates
 $z_p$ with $s^{n-p}t^p$, we can associate to  $\langle f_0,f_1,f_2\rangle$ an $(n-3)$-dimensional subspace  $\Pi \subset \PP ^n$, given by the equations  
 \begin{equation}\label{fk}
  f_u({\bf{z}})=a_{u0}z_0 + a_{u1}z_{1} + \cdots + a_{un}z_n=0, \; \; u=0,1,2.
  \end{equation}

Notice that $\Pi\cap C_n = \emptyset$, since $f_0,f_1,f_2$ have no common zeroes.

We can consider the projection map $\pi: (\PP^n - \Pi) \rightarrow \Pi^\perp \cong \PP^2$ defined as:

\begin{equation}\label{pi}\pi(z_0:\ldots : z_n)\ =\ (z_0\ z_1 \ldots \ z_n) \cdot \begin{pmatrix}  a_{00}& a_{10}& a_{20} \cr a_{01}& a_{11}& a_{21} \cr &\vdots & \cr a_{0n} &a_{1n} & a_{2n} 
\end{pmatrix} \end{equation}
  
\noindent where $\Pi ^\perp \cong \PP^2$,  $\Pi^\perp =\langle F_0,F_1,F_2\rangle $, and $F_u=(a_{u0}:a_{u1}:\ldots :a_{un})$. 
 If we consider, in $ \Pi^\perp \cong \PP^2$, homogeneous coordinates $w_0,w_1,w_2$, with $(w_0:w_1:w_2) = w_0F_0+w_1F_1+w_2F_2 \in \PP^n$, we get that 
$$
\pi(z_0:\ldots :z_n)\ =\ (w_0:w_1:w_2)\quad {\rm with}\quad w_u = (z_0 \ldots z_n)\cdot F_u.
$$

Then it is immediate to check that the projection  $\pi(C_n)$ from $\Pi$ on the plane $\Pi ^\perp$  is exactly $C$, i.e. that we have ${\mathbf{f}}(s,t) = (\pi \circ \nu_n) (s,t)$, $\forall \, (s,t)\in \PP^1$.

Now, if we consider the equations (\ref{fk})  in $ \PP ^k\times \PP ^n$, we get a scheme $\tilde \Pi =p_2^{-1}(\Pi)$ and the intersection scheme   $Y' = Y \cap \tilde\Pi$, which is $(2k-4)$--dimensional (since $\dim Y =2k-1$ and $\mathbf{f}$ is a proper parameterization); we have that $p_1(Y') = \PP ^k$ for $k\geq 4$, while for $k=3$, $p_1(Y') = S_3 \subset \PP ^3$ is the  {\it Poncelet variety} associated to  $\langle f_0,f_1,f_2\rangle $ (e.g. see \cite{ISV}).

We are going to consider the $(n-k+4)\times (n+1)$ matrices:

\begin{equation}\label{trueMk}
M_k=  \begin{pmatrix} x_0&x_1&\ldots &x_k&0&0&\cdots &0 \cr
0&x_0&x_1&\cdots &x_k&0&\cdots&0 
\cr  & &\ddots&&  & & &  \cr
0&\cdots&0&x_0&x_1&\cdots &\cdots &x_k\cr
 a_{00}& a_{01}& a_{02}& a_{03}& \cdots& \cdots & a_{0n-1} &a_{0n}\cr
a_{10}& a_{11}& a_{12}& a_{13}& \cdots & \cdots & a_{1n-1} &a_{1n}\cr a_{20}& a_{21}& a_{22}& a_{23}& \cdots & \cdots& a_{2n-1} &a_{2n} 
\end{pmatrix}.
\end{equation}

For $k=3$, det$M_3$ defines a surface $S_3$  of degree $n-2$ in $\PP^3$; let us point out that in our paper \cite{BGI2} we used the singularities of such surface in order to investigate the presence of triple points on $C$; actually the use of the $0$-dimensional schemes $X_k$ we are going to define below is more efficient. 
\begin{defn}\label{defXk} Let $C\subset \mathbb{P}^2$ be a rational curve and $(f_0, f_1, f_2)$ be a proper parameterization.
For $2\leq k \leq n-1$, let  $X_k\subset \PP^k$ be the scheme defined by the $(n-k+3)$-minors of $M_k$ .
\end{defn}

Notice that for a generic rational curve $C\subset \PP^2$ of degree $n$,  the scheme $X_k$ will be empty for $k\geq 3$.

We want to use the schemes $X_k$, in order to study the singularities of $C$. The starting point for this project is the following result: 

\begin{prop}\label{Singpoint}
Let $C\subset \PP^2$ be a rational curve. The schemes $X_k$ introduced in Definition \ref{defXk} are either 0-dimensional or empty. Moreover: 
\begin{itemize}
\item $\forall \, k$, $2\leq k\leq n-1$, $X_k$ is non-empty iff there is at least a singular point on $C$ of multiplicity $\geq k$.

\item  Every singular point of $C$ yields at least a simple point of $X_2$ and $$\length X_2 =  {n-1\choose 2}$$ (notice that $X_2$ is never empty since $n\geq 3$). 
\end{itemize}
\end{prop}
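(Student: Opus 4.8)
The plan is to convert the determinantal definition of $X_k$ (Definition \ref{defXk}) into the geometric statement that the $k$-secant $(k-1)$-plane $H_P:=p_2(p_1^{-1}(P))$ parameterized by $P\in\PP^k$ gets contracted to a point by $\pi$. Fixing $P=(x_0:\cdots:x_k)$ and reading off $M_k(P)$, its top $n-k+1$ rows form $A_k(P)$, and these have rank exactly $n-k+1$ for \emph{every} $P$ (multiplication by the nonzero form $x_0T^k+\cdots+x_k$ is injective on forms of degree $\le n-k$), so their common zero locus in $\PP^n$ is precisely $H_P$; the last three rows cut out $\Pi$. Hence the kernel of $M_k(P)$ is $H_P\cap\Pi$ and $\dim(H_P\cap\Pi)=n-\rank M_k(P)$. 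Since $\Pi\cap C_n=\emptyset$ while $H_P$ always meets $C_n$, we have $H_P\not\subseteq\Pi$, so $\dim(H_P\cap\Pi)\le k-2$ automatically. The ideal of Definition \ref{defXk} imposes $\rank M_k(P)\le n-k+2$, i.e. $\dim(H_P\cap\Pi)\ge k-2$; combining the two, $P\in X_k$ if and only if $\dim(H_P\cap\Pi)=k-2$, which is exactly the condition that $\pi$ collapses $H_P$ onto a single point $\pi(H_P)\in\PP^2$.

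With this dictionary the first bullet and the dimension claim follow. If $H_P$ is contracted, every point of the length-$k$ scheme $H_P\cap C_n$ is mapped by $\pi|_{C_n}=\mathbf{f}$ to the single point $\pi(H_P)$, which therefore has at least $k$ preimages and is a point of multiplicity $\ge k$ on $C$ (smooth points are excluded, since at a smooth point the osculating $(k-1)$-space of $C$ is a line or all of $\PP^2$, never a point). Conversely, if $P_0$ has multiplicity $m\ge k$, its scheme-theoretic preimage $\mathbf{f}^{-1}(P_0)$ is a length-$m$ divisor on $\PP^1$, and any length-$k$ subdivisor $D\le\mathbf{f}^{-1}(P_0)$ gives a form whose plane $\nu_n(D)$ lies in the fibre $\langle\Pi,P_0\rangle$ and is thus contracted, producing a point of $X_k$. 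For finiteness, the assignment $P\mapsto\pi(H_P)$ maps $X_k$ into the finite set $\Sing(C)$, and over each singular point $P_0$ the fibre consists of the finitely many length-$k$ subdivisors of the finite scheme $\mathbf{f}^{-1}(P_0)$; hence $X_k$ is a finite set, i.e. $0$-dimensional or empty.

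For the length, take $k=2$: here $X_2\subset\PP^2$ is the corank-one determinantal scheme $\{\rank M_2\le n\}$ of the $(n+2)\times(n+1)$ matrix $M_2$, whose $n-1$ rows of $x_i$'s are linear and whose last three rows are constant. Viewing $M_2$ as a morphism $\mathcal{O}_{\PP^2}^{\,n+1}\to\mathcal{O}_{\PP^2}(1)^{\,n-1}\oplus\mathcal{O}_{\PP^2}^{\,3}$, the expected codimension of $X_2$ is $(n+2-n)(n+1-n)=2$, which by the previous step is the actual codimension; the determinantal scheme is then Cohen--Macaulay of expected codimension, so Thom--Porteous computes its class as $c_2\big(\mathcal{O}_{\PP^2}(1)^{\,n-1}\big)=\binom{n-1}{2}h^2$, with $h$ the hyperplane class, giving $\length X_2=\binom{n-1}{2}$. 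Finally, that each singular point contributes at least a simple point of $X_2$ is seen by exhibiting, for a point with two distinct branches at $t_1,t_2$, the contracted secant line $\langle\nu_n(t_1),\nu_n(t_2)\rangle$ (and, for a unibranch point, the contracted tangent line), and checking that the rank condition is transverse there, so the corresponding point of $X_2$ is reduced.

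The main obstacle is the length computation. The rest is a clean translation, but establishing $\length X_2=\binom{n-1}{2}$ requires first knowing $X_2$ has the expected codimension $2$ (which is why the dimension statement must be proved beforehand), and then the Cohen--Macaulayness needed to equate the length of the scheme with the degree of its Thom--Porteous class, equivalently the resolution of the ideal of maximal minors by an Eagon--Northcott complex. One could instead read the number off the genus formula, since $C$ is rational and its $\delta$-invariant equals $p_a=\binom{n-1}{2}$, with $X_2$ the double-point scheme computing $\delta$; but matching $\length X_2$ with $\delta$ locally at non-ordinary singularities is itself delicate, so I expect the Thom--Porteous route to be the most economical. The transversality check underlying the ``simple point'' claim I expect to be routine, and the consistency $\binom{n-1}{2}>0$ for $n\ge3$ makes the non-emptiness of $X_2$ automatic.
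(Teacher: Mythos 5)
Your proposal follows essentially the same route as the paper: the rank condition on $M_k$ is translated into $\dim(H_P\cap\Pi)=k-2$, i.e.\ contraction of the $k$-secant space, finiteness follows from finiteness of $\Sing(C)$, and the length of $X_2$ is extracted from the determinantal structure in expected codimension $2$ — the paper writes out the Eagon--Northcott resolution and computes $h^0(\mathcal{O}_{X_2})$ by an Euler-characteristic count, while you invoke Thom--Porteous, which you correctly note is the same input. One caution: your final "transversality" claim that each singular point gives a \emph{reduced} point of $X_2$ is not routine and is in fact false for non-ordinary singularities (for a tacnode the corresponding point of $X_2$ has length $\delta_P=2$, cf.\ Proposition \ref{SevVar}); but this is not needed, since the assertion "yields at least a simple point" only requires one point of $\Supp(X_2)$, which your contracted secant/tangent line already provides.
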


\begin{proof}
There is a singular point of multiplicity at least $k$ on $C$ if and only if there is at least a  $(k-1)$-space $H\subset \PP^n$ that is $k$-secant to $C_n$ and whose intersection with the center of projection $\Pi$ has dimension $k-2$ ($H\cap C_n$ collapses to the singular point of $C$ under the projection from $\Pi$). Notice that $H$ need not to be such that $H\cap C_n$ is given by $k$ distinct points: in that case the singular point of $C$ has $k$ distinct branches. All we are asking is that the divisor $H\cap C_n$ has degree $k$. 
The dimension of $\Pi\cap H$ is $k-2$ if and only if the point $P_H\in \PP^k$ that parameterizes $H$ is such that the matrix $M_k$ has rank $n-k+2$ at $P_H$.  If $k=2$ this means that $P_H\in X_2$, while if $k\geq 3$ not only $P_H \in X_k$, but every $(j-1)$-subspace $H_i\subset H$ which is $j$-secant to $C_n$ will yield a point $P_{H_i}\in X_j$ (every different subscheme of length $j$ of $H\cap C_n$ will span such a $H_i$).

When $k\geq 3$, $H$ collapses to a point of $C$ if and only if $P_H\in X_k$.  Since $\Sing(C)$ is a finite set, for all $k\geq 2$, the scheme $X_k$ is 0-dimensional (or empty).

Since $n\geq 3$, $C$ cannot be smooth, so $X_2\subset \PP^2$ is 0-dimensional (and not empty) and its ideal $I_{X_2}$ has height 2 in $K[x_0,x_1,x_2]$ and it is defined by the $(n+1)$ (maximal) minors of a $(n+1)\times (n+2)$ matrix of forms. Hence, by \cite{refEN}, a minimal free resolution of $\mathcal{O}_{X_2}$ is given by the Egon-Northcott complex, as follows:
$$
0 \rightarrow   \mathcal{O}^{\oplus (n+1)}(-n+1)\rightarrow
 \mathcal{O}^{\oplus 3}(-n+1)\oplus \mathcal{O}^{\oplus (n-1)}(-n+2) \rightarrow \mathcal{O} \rightarrow \mathcal{O}_{X_2} \rightarrow 0,
$$

\noindent where $\mathcal{O}=\mathcal{O}_{\PP^2}$ and the second map is defined by $M_2$. From here we can conclude since the length of $X_2$ is $h^0(\mathcal{O}_{X_2})$, and, via twisting by $\mathcal{O}(n-1)$ and taking cohomology, we get: 

$$h^0(\mathcal{O}_{X_2}) = h^0(\mathcal{O}(n-1)) -3h^0(\mathcal{O})-(n-1)h^0(\mathcal{O}(1)) + (n+1)h^0(\mathcal{O}) = $$
$$ = {n+1\choose 2} -3 -3(n-1) +n+1 = {n-1 \choose 2},$$
as required. \end{proof}

\medskip

Recall that since $C$ has genus 0, the Clebsch formula gives $$ \deg X_2 = {n-1\choose 2}\ =\ \sum _{q}{m_q\choose 2},$$
where $q$ varies over all singularities on, and infinitely near, $C$. 

For all $P\in \Sing(C)$, we will indicate with  $$\delta_P = \sum_q {m_q\choose 2},$$  where $q$ runs over all $q$'s infinitely near $P$ (and  $\sum_q m_q \leq m_P$). The invariant $\delta_P$ measures the contribution of $P$ to the  Clebsch formula, and tells us that $P$ is equivalent to $\delta_P$ nodes for the genus count of $C$.

More algebrically, the number $\delta_P$ can also be defined like this:
 $$\delta_P = \length \left(\tilde{\mathcal{O}}_{C,P}/\mathcal{O}_{C,P}\right),$$
where $\mathcal{O}_{C,P}$ is the local ring of the structure sheaf $\mathcal{O}_{C}$ at the point $P$, and $\tilde{\mathcal{O}}_{C,P}$ is its integral closure. 

\section{Study of the set $\Sing(C)$.}

Our approach to the problem of finding and analyzing the singularities of a rational plane curve $C$ given parametrically uses the projection from $C_n\subset \PP^n$ which gives $C\subset \PP^2$ (by exploiting the equations of its center of projection $\Pi\subset \PP^n$ that are given by the parameterization of $C$) and the parameterization (by $\PP^k$) of the $k$-secant $(k-1)$-spaces of $C_n$.

\subsection{Cuspidal curves}

A first problem of particular interest that we will consider is how to determine when the curve $C$  is cuspidal, i.e. when all its singular points are cusps (i.e. uniramified singular points). This happens when the series of blowups which resolves the singularity  yields, at any singular point $P\in C$, only one point over $P$. Cuspidal rational curves on $\mathbb{C}$ are of particular interest since topologically they are spheres. Such curves have been widely studied (e.g. see \cite{MOE,FZ,O,Pi}).  The following proposition can be of interest since it gives a criterion to decide whether a given rational curve is cuspidal or not. We can also determine when we have only ordinary singularities, i.e. every singular point $P\in C$ with multiplicity $m$  comes via projection from $m$ distinct points $Q_1,\ldots,Q_m$ of $C_n$ and the tangent lines $T_{Q_i}(C_n)$ are such that $\pi(T_{Q_i}(C_n)) \neq \pi(T_{Q_{i'}}(C_n))$ for $i\neq i'$). This implies that there are no singularities infinitely near to $P$.

\medskip
\begin{prop}\label{cuspidal}
Let $C\subset \PP^2$ be a rational curve, given by a proper parameterization $(f_0:f_1:f_2)$, with $f_i\in K[s,t]_n$. Let  $\mathcal{C}_2$ be the conic $\{4x_0x_2-x_1^2=0\}$ and $X_2 \subset \PP^2$ be the 0-dimensional scheme defined in Definition \ref{defXk}. Then:

\begin{itemize}
\item $C$ is cuspidal if and only if $\Supp(X_2) \subset \mathcal{C}_2$.
 
\item $C$ has only ordinary singularities if and only if the scheme $X_2$ is reduced and $X_2\cap \mathcal{C}_2 = \emptyset$.
\end{itemize}
Moreover, in the first case, the number of singular points of $C$ is exactly the cardinality of $\Supp(X_2)$.
\end{prop}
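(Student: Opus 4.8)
The plan is to translate both equivalences into statements about the geometry of the secant structure of $C_n$ relative to the center of projection $\Pi$, using the dictionary established in Proposition \ref{Singpoint}: a point $P_H\in X_2$ parameterizes a line $H$ that is $2$-secant to $C_n$ and meets $\Pi$ in a point, and under $\pi$ the divisor $H\cap C_n$ collapses to a singular point of $C$. The key observation I would exploit is the geometric meaning of the conic $\mathcal{C}_2=\{4x_0x_2-x_1^2=0\}$: as explained in the preliminaries, $\mathcal{C}_2$ is exactly the locus of points of $\PP^2$ that parameterize forms in $K[s,t]_2$ which are squares of linear forms, i.e. the rational normal curve whose points correspond to $2$-osculating (tangent) lines to $C_n$ rather than genuine secants through two distinct points.

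For the cuspidal statement, first I would record that a point $P_H\in\Supp(X_2)$ lies on $\mathcal{C}_2$ if and only if the corresponding quadratic form is a perfect square, i.e. $H$ is a tangent line $T_Q(C_n)$ rather than a secant through two distinct points. Then I would argue the forward direction: if $C$ is cuspidal, every singular point $P$ is uniramified, so the preimage divisor in $C_n$ of any contracted secant must be supported at a single point $Q$, forcing $H=T_Q(C_n)$ and hence $P_H\in\mathcal{C}_2$; this must hold for every point of $\Supp(X_2)$. For the converse, I would contrapose: if some point $P$ of $C$ has two distinct branches, then there is a genuine $2$-secant line through two distinct points of $C_n$ that collapses under $\pi$, giving a point of $\Supp(X_2)$ off $\mathcal{C}_2$. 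The final \emph{Moreover} clause then follows because in the cuspidal case each singular point $P$ contributes to $\Supp(X_2)$ exactly one point, namely the parameter of the unique tangent line $T_Q(C_n)$ with $\pi(Q)=P$, so the map from singular points to $\Supp(X_2)$ is a bijection.

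For the ordinary-singularities statement, I would use the definition given just before the proposition: $P$ of multiplicity $m$ is an ordinary singularity when it comes from $m$ distinct points $Q_1,\dots,Q_m$ of $C_n$ with distinct tangent directions at $P$, and in particular no infinitely near singularities. The condition $X_2\cap\mathcal{C}_2=\emptyset$ rules out tangent (cuspidal) directions, guaranteeing all collapsed secants are honest chords through distinct points; the reducedness of $X_2$ is what encodes the transversality/distinctness of branches and the absence of infinitely near points, since a non-reduced structure at $P_H$ would signal either coincidence of tangent lines after projection or infinitely near behaviour contributing extra length to $X_2$ beyond the $\binom{m}{2}$ simple points expected from an ordinary $m$-fold point. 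I would verify both directions by comparing the local length of $X_2$ at each contributing point against the combinatorial count $\binom{m}{2}$ coming from the Clebsch/$\delta_P$ formula recalled after Proposition \ref{Singpoint}.

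The main obstacle I anticipate is making the reducedness claim precise: I must show that an ordinary $m$-fold point contributes exactly $\binom{m}{2}$ \emph{distinct reduced} points to $X_2$ (one for each pair $Q_i,Q_j$) and, conversely, that any non-ordinary feature—be it a tangential branch, a pair of branches with coincident projected tangents, or an infinitely near singularity—forces either a point on $\mathcal{C}_2$ or a non-reduced (fat) point of $X_2$. Controlling the scheme structure of $X_2$ at such a point, rather than just its support, is the delicate part; I expect to handle it by a local analysis of the minors of $M_2$ near $P_H$, relating the length of $X_2$ at $P_H$ to the intersection multiplicity of the branches of $C$ at the image point, and invoking the global length equality $\length X_2=\binom{n-1}{2}=\sum_q\binom{m_q}{2}$ to force that the total length is concentrated in reduced points precisely when every singularity is ordinary.
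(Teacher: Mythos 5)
Your proposal is correct and follows essentially the same route as the paper: identify $\mathcal{C}_2$ with the parameters of contracted tangent lines, match singular points bijectively with tangents in the cuspidal case, and combine $\length X_2=\binom{n-1}{2}$ with the Clebsch formula to force reducedness in the ordinary case. The only real difference is that the paper's converse for the second bullet sidesteps the local analysis of the minors of $M_2$ that you flag as the delicate step: once $X_2$ is reduced and disjoint from $\mathcal{C}_2$, the chain $\binom{n-1}{2}=\deg X_2=\#\Supp(X_2)\le\sum_{P\in\Sing(C)}\binom{m_P}{2}\le\binom{n-1}{2}$ is forced to be an equality, which already excludes infinitely near singularities, so no fat-point or $\delta_P$ computation is needed.
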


\begin{proof}

The curve $C$ is cuspidal if and only if no secant line to $C_n$ gets contracted in the projection to $C$, but only tangent ones, i.e. if every point $Q\in \Pi\cap \sigma_2(C_n)$ lies on $\tau(C_n)$; this happens if and only if $R = p_1(p_2^{-1}(Q))$ belongs to $\mathcal{C}_2$, since the points of $ \mathcal{C}_2$ parameterize tangent lines inside $\sigma_2(C_n)$. Moreover, each cuspidal point of $C$  (regardless to its multiplicity) corresponds to a unique tangent to $C_n$ which is contracted by $\pi$, hence their number is given by $\Supp(X_2)\cap \mathcal{C}_2$.

On the other hand, if $C$ has only ordinary singularities, then no tangent line to $C_n$ gets contracted in the projection to $C$, (hence  $X_2\cap \mathcal{C}_2 = \emptyset$), so every singular point of multiplicity $m$ of $C$ comes from $m$ distinct points of $C_n$ (under the projection $\pi : C_n \rightarrow C$), and ${m \choose 2}$ secant lines of $C_n$ get contracted by $\pi$, yielding ${m \choose 2}$ points of $X_2$ (see Proposition \ref{Singpoint}).

Now, since $C$ has only ordinary singularities, by Clebsch formula we have ${n-1 \choose 2} = \sum_{P\in \Sing(C)}{m_P\choose 2}$, so, since $\length(X_2)= {n-1 \choose 2}$, and each $P\in  \Sing(C)$ yields ${m_P\choose 2}$ (distinct) points in $X_2$, the scheme $X_2$ has to be reduced.

If we know that $X_2$ is reduced and $X_2\cap \mathcal{C}_2=\emptyset$, instead, we have that there are no cuspidal points and we have that $\Supp(X_2)$ contains at most $\sum_{P\in \Sing(C)}{m_P\choose 2}$ points, so deg$X_2 ={n-1 \choose 2} \leq  \sum_{P\in \Sing(C)}{m_P\choose 2}$. But, by Clebsch formula,   ${n-1 \choose 2} \geq \sum_{P\in \Sing(C)}{m_P\choose 2}$, so $\deg X_2 ={n-1 \choose 2} =  \sum_{P\in \Sing(C)}{m_P\choose 2}$. Thus any singular point $P$ corresponds exactly to ${m_P\choose 2}$ secant lines to $C_n$ which get contracted by $\pi$, and they all sum up exactly to ${n-1 \choose 2}$;  this implies that there cannot be any infinitely near singularities, so all the singularities are ordinary ones.
\end{proof}

\begin{rem}
In order to use the previous proposition, one can use programs for symbolic computations, as CoCoA \cite{COCOA} or Macaulay \cite{m2}, like this:  given a parameterized curve $C\subset \PP^2$, use $f_0,f_1,f_2$ to write the matrix $M_2$ and compute the ideal $I_{X_2}$ defined by its maximal minors. Compute $J$, the radical ideal of $I_{X_2}$. Compute the ideal $J' :=J+(4x_0x_2-x_1^2)$.  If $J=J'$ then Supp$(C) \subset \mathcal{C}_2$, and $C$ is cuspidal.  If $J'$ is irrelevant and $I_{X_2}=J$, then $X_2\cap \mathcal{C}_2 =\emptyset$ and $X_2$ is reduced, so $C$ has only ordinary singularities (the fact that $J'$ is irrelevant, i.e. associated to the empty set, can be checked via its Hilbert function). 
\end{rem}

\medskip

\begin{ex}\label{6ticA2A4} Consider the following quartic curve $C\subset\mathbb{P}^2$:
$$\left\{ \begin{array}{l}
x=s^4 + s^3t \\
y= s^2t^2 \\
z= t^4
\end{array}
\right..
$$
Computing the ideal $I_{X_2}$ one finds that {$I_{X_2}=(x_0x_2-x_1x_2, x_1^2, x_0x_1)$}
and $X_2$ has length 3, while its radical is $J_{X_2}= (x_1,x_0x_2)$, hence 
$$\Supp(X_2) = \{(0:0:1), (1:0:0)\}\subset \mathcal{C}_2$$ and $C$ is cuspidal, with two cuspidal singular points (necessarily both of multiplicity 2). 
\end{ex}

\medskip

\begin{ex}\label{4ticA6} Consider the following quartic curve $C\subset\mathbb{P}^2$:
$$\left\{ \begin{array}{l}
x=s^4 + st^3 \\
y= s^2t^2 \\
z= t^4
\end{array}
\right..
$$
Computing the ideal $I_{X_2}{=(x_1^2, x_0x_1, x_0^2+x_1x_2)}$, one finds that $X_2$ has degree 3, while its radical is $J_{X_2}= (x_1,x_0)$, hence $\Supp(X_2) = \{(0:0:1)\}\subset \mathcal{C}_2$, hence $C$ is cuspidal, with one cuspidal singular point  and one can check that it is a point of multiplicity 2, since $X_3$ is empty. 
\end{ex} 

\medskip

\begin{ex}\label{6tic3triple} Consider the following sextic $C\subset\mathbb{P}^2$:
$$\left\{ \begin{array}{l}
x=4s^6 - 16s^5t + 3s^4t^2 + 28s^3t^3 - s^2t^4 - 6st^5 \\
y=4s^5t - 12s^4t^2 - 41s^3t^3 + 99s^2t^4 + 10st^5 - 24t^6 \\
z= s^5t - 3s^4t^2 - 13s^3t^3 + 27s^2t^4 + 36st^5
\end{array}
\right..
$$
Computing the ideal $I_{X_2}$, one finds that $I_{X_2}$ is a radical ideal, with support on 10 points, and that $I_{X_2}+(4x_0x_2-x_1^2)$ is irrelevant, hence none of the 10 points comes from cuspidal points. By Proposition \ref{cuspidal}, the curve $C$ has only ordinary singularities. 
\end{ex}

\subsection{Number of singularities}

Another problem of interest is to determine the number of singular points of $C$. What we will give here are algorithms that allow one to compute this number (and also how many singularities there are for each multiplicity). The algorithms depend also on the possibility to determine equations for the varieties which parameterize binary forms with given factorization.

The  main idea we will exploit is this: each singular point of $C$ of multiplicity $k$ is associated to one point in $X_k$ that parameterizes a $H\cong \mathbb{P}^{k-1}$, $k$-secant to $C_n$ which gets contracted by the projection $\pi$.  Notice that also each subspace $\widetilde{ H}\cong \mathbb{P}^{j-1} \subset H$ which is a $j$-secant to $C_n$ will be associated to a point in $X_j$, but it will not be associated to a singular point of $C$. So, if $j< k$ the points on  $X_j$ can be associated to singularities of multiplicity $j$, or come from those  $\widetilde{ H}$'s that are associated to singularities of $C$ of multiplicity $>j$. Therefore  in order to compute the number of singularities of $C$ of order $j$, we will have to distinguish the two kinds of points on $X_j$.

\begin{notation}\label{Not1} For any integer $k\geq 0$, we will consider the partitions    of $k$, and we will write a partition as an element $\overline{\lambda}=(\lambda_1,\ldots,\lambda_k) \in \mathbb{N}^k$, with 
$$\lambda_i\geq 0,\  \sum_{i=1}^k \lambda_i=k\quad \text{and} \quad  \lambda_i \geq \lambda_{i'} \quad  \text {if}\quad i\leq i'.$$

 We will consider the variety $\mathcal{R}_{\overline{\lambda} }\subset \PP^k$  where we view $\PP^k$ as the space parameterizing degree $k$ binary forms (i.e. $\PP^k = \PP (K[s,t]_k)$),
and  $\mathcal{R}_{\overline{\lambda} }$  parameterizes {the projective classes of } forms $G = L_1^{\lambda_1}\cdots L_k^{\lambda_k}$, where the $L_j$'s are linear forms:
{$$\mathcal{R}_{\overline{\lambda} }:=\left\{[G]\in\PP (K[s,t]_k)\, | \, G = L_1^{\lambda_1}\cdots L_k^{\lambda_k}, L_i \in K[s,t]_1 \right\}.$$}
 We know that $\mathcal{R}_{\overline{\lambda}}$ is an irreducible variety since it can be given parametrically.  

\medskip
\begin{rem}\label{idealRlambda}
It is not too hard to find the equations of $\mathcal{R}_{\overline{\lambda}}$: we can write the product $(a_1s+b_1t)^{\lambda_1}\cdots(a_ks+b_kt)^{\lambda_k}$ in the form $ \sum_{j= 0}^k \alpha_js^{k-j}t^j$, $\alpha_j\in K[a_1,b_1,\ldots,a_k,b_k]$ and then proceed with the elimination of the $a_i,b_i$'s from the ideal $(x_0-\alpha_0, \ldots, x_k-\alpha_k)$ in the polynomial ring $K[x_0,\ldots,x_k,a_1,b_1,\ldots,a_k,b_k]$. This yields the ideal $I_{ \mathcal{R}_{\overline{\lambda}}}$.
 \end{rem}

The set of partitions of $k$ is partially ordered, and we will consider $\overline{\lambda} \preceq \overline{\lambda'}$ if one can get $\overline{\lambda'}$ from $\overline{\lambda}$ by substituting some  $\lambda_j$, with a partition of it. \\ {For example, if we  use subscripts for repeated indexes,} we have $(5,3,2,1_8,0_7) \preceq (5,2,2,1_9,0_6)$, since we can get the second by substituting 3 with $(2,1)$ in the first one. 
\end{notation}

\begin{rem}\label{containementR}
{A variety  $\mathcal{R}_{\overline{\lambda}}$ is contained in $\mathcal{R}_{\overline{\lambda'}}$}, if and only if  $\overline{\lambda} \preceq \overline{\lambda'}$. 
\\
Clearly $\mathcal{R}_{(k,0,\ldots,0)} =  \mathcal{C}_k$  is contained in every $\mathcal{R}_{\overline{\lambda}}$; in particular, we have: 

\begin{equation}\label{containementRosc}
\mathcal{R}_{(k,0_{k-1})} =  \mathcal{C}_k \subset \mathcal{R}_{(k-1,1,0_{k-2})} \subset \mathcal{R}_{(k-2,1,1,0_{k-3})} \subset \cdots \subset  \mathcal{R}_{(2,1_{k-2},0)} \subset \mathcal{R}_{(1_{k})} =  \PP^k.
\end{equation}

Here  $\mathcal{R}_{(k-j,1_{j},0_{k-j-1})} = O^j(\mathcal{C}_k)$, the variety of $(j+1)$-osculating $j$-spaces to $\mathcal{C}_k$, i.e. $ O^j(\mathcal{C}_k)$ is the union:  $\cup_{P\in \mathcal{C}_k} O^j_P(\mathcal{C}_k)$ of the $j$-osculating spaces to $ \mathcal{C}_k $ (where $O^j_P(\mathcal{C}_k)$ is the linear span of the subscheme $(j+1)P\subset  \mathcal{C}_k $ in $\PP^k$). In particular,  $O^1(\mathcal{C}_k)= \tau(\mathcal{C}_k)$, the tangent developable of  $\mathcal{C}_k$.
\end{rem}

We will see that once we compute, for each $X_k$ of Definition \ref{defXk}, the cardinality of $\Supp(X_k)\cap \mathcal{R}_{\overline {\lambda}}$, for all partitions $\overline {\lambda}$ of $k$, we can also get an algorithm to compute the number of singular points of $C$ of given multiplicity.  

{\begin{notation}\label{Zk}
Let $Z_2,\ldots,Z_{n-1}$, $Z_i\subset \PP^i$, be the reduced schemes which are, respectively, the supports of $X_2,\ldots,X_{n-1}$ introduced in Definition \ref{defXk} and let $N_j$, for $j=2, \ldots , n-1$ be the number  of singularities of order $j$ of the curve $C\subset \mathbb{P}^2$.
\end{notation}}

\bigskip

{Notice that if $k$ is the maximum value such that $X_k\neq \emptyset$, then 
\begin{equation}\label{Nk}
N_k=\sharp Z_k
\end{equation}
(this will be the main content of Step \ref{Step1} in Algorithm \ref{trueAlg1}). Moreover, if $k'<k$,  then in $Z_{k'}$  we will find all the points associated  to all the singularities of order $j\geq k'$. 
To be more precise, in Proposition \ref{Algorithm1} we will show that if $\overline{\sigma}$ is a partition of $k-1$, then not all the points  of   $Z_{k-1}\cap \mathcal{R}_{\overline {\sigma}}$ are associated to  singularities of order $k-1$ of $C$. This happens when $\overline{\sigma}$ can be obtained from a partition $\overline{\lambda}$ of $k$,  such that $Z_{k}\cap \mathcal{R}_{\overline {\lambda}}\neq \emptyset$,  by subtracting  1 from some $\lambda_i$ (cf. Definition \ref{ancestor}).
}

\begin{ex} Consider the following quartic $C\subset \mathbb{P}^2$:
$$\left\{ \begin{array}{l}
x=s^4+t^4\\
y=s^4+s^2t^2+t^4 \\
z=s^3t
\end{array}\right. .
$$
Here we have that the ideal of the 5-minors of $M_2$ is $I_{X_2}=(x_2^2, x_0x_2, x_0x_1)$, hence $X_2$ has length 3 with support on 2 points, i.e. $X_2$ is the union of a simple point and of a scheme of length 2 supported at one point, so  $\length(Z_2)=2$.
The ideal of the 4-minors of $M_3$ is $I_{X_3}=(x_3, x_2, x_0)$ hence $X_3=Z_3$ is one simple point. Trivially, since $\deg C=4$, the scheme $X_4$ is empty. 
\\
Since $k=3$ is the maximum value such that $X_k\neq \emptyset$ and  $\length( Z_3)=1$ then $N_3=1$ and therefore $C$ has one triple point. Moreover we have that  $Z_3\cap  \mathcal{R}_{(2,1)}\neq \emptyset$, i.e. $Z_3$ has support on  $\mathcal{R}_{(2,1)}$.
\\
Now, it is trivial that a plane rational quartic with a triple point does not have any double point, but let us see this from the structure of $X_2$. If we count how many double points $C$ has, we have first to understand which are the $\mathcal{R}_{\overline{\lambda}}$ without trivial intersection with $Z_2$. Now $\length( Z_2 \cap \mathcal{R}_{(2,0)})=1$ and  $\length \left(Z_2 \cap \left(\mathcal{R}_{(1,1)}\setminus \mathcal{R}_{(2,0)} \right)\right)=1$. Clearly both $(1,1)$ and $(2,0)$ are partitions of 2 that can be obtained from $(2,1)$ by subtracting 1 to an entry: $(1,1)=(2-1,1)$ and  $(2,0)=(2,1-1)$. Therefore neither  $Z_2 \cap \mathcal{R}_{(2,0)}$ nor $ Z_2 \cap \mathcal{R}_{(1,1)}$ contribute to the singularities of order 2 of $C$. Hence we have re-discovered that $C$ does not have any double point.
\end{ex}

Before giving the details of the algorithm we need the following definition.

\begin{defn}\label{ancestor} We say that a partition $\overline{\lambda}=(\lambda_1, \ldots , \lambda_k)$ of $k$ is an \emph{ancestor} of a partition $\overline{\sigma}=(\sigma_1, \ldots , \sigma_{k-1})$ of $k-1$, if the $k$-uple $(\sigma_1, \ldots , \sigma_{k-1},0)$ can be obtained from the $k$-uple $(\lambda_1, \ldots , \lambda_k)$ by subtracting  1 to an entry $\lambda_i>0$ and reordering the entries in decreasing order. Moreover we define $n_{\overline{\lambda},\overline{\sigma}}$ to be the number of ways in which we can get $\overline{\lambda}$ as an ancestor of $\overline{\sigma}$. 
\end{defn}

\begin{ex} The partition $\overline{\lambda}=(3,3,1,1,0_4)$ of 8 is an ancestor of the partition $\overline{\sigma}=(3,2,1,1,0_3)$ of 7. Moreover $n_{\overline{\lambda},\overline{\sigma}}=2$, in fact $\overline{\lambda}$ can be seen as an ancestor  of $\overline{\sigma}$ in two different ways, by subtracting 1 to either of the first two entries and dropping the last zero. 
\end{ex}

We are now ready to describe the algorithm.

\begin{prop}\label{Algorithm1}
Let $C\subset \PP^2$ be a rational curve, given by a proper parameterization $(f_0:f_1:f_2)$, with $f_i\in K[s,t]_n$, and let $X_{n-1}, \ldots ,X_3,X_2$ be as in Definition \ref{defXk}. Then the following Algorithm \ref{trueAlg1}, based on the structure of the schemes $X_{n-1},\ldots,X_3,X_2$, computes the number $N$ of singular points of $C$. Moreover, Algorithm \ref{trueAlg1} computes also the number $N_k$ of singular points of multiplicity $k$. 
\end{prop}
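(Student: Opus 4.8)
\emph{Proof proposal.} The plan is to prove the correctness of Algorithm \ref{trueAlg1} by setting up a precise dictionary between the points of the support schemes $Z_k$ and certain effective divisors on $C_n$, and then arguing by downward induction on $k$. The dictionary, furnished by Proposition \ref{Singpoint}, reads a point $q\in Z_k$ as the degree-$k$ divisor $E_q=H_q\cap C_n$, where $H_q$ is the $(k-1)$-plane $k$-secant to $C_n$ parameterized by $q$ (equivalently, $E_q$ is the divisor of roots of the binary form representing $q$, transported to $C_n$ by $\nu_n$). By Proposition \ref{Singpoint}, $q$ lies in $Z_k$ exactly when $E_q$ is a sub-divisor of $D_P:=H_P\cap C_n$ for some $P\in\Sing(C)$, $H_P$ being the space contracted by $\pi$ to $P$ and $\deg D_P=m_P$. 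Call $q$ \emph{genuine} if $E_q=D_P$ (so $m_P=k$) and \emph{spurious} if $E_q\subsetneq D_P$ (so $m_P>k$). The first key point is a separation statement: distinct singular points have disjoint $\mathbf f$-fibres, so the $D_P$ have pairwise disjoint supports; hence every $E_q$ is a sub-divisor of a \emph{unique} $D_P$, and the genuine points of $Z_k$ are in bijection with $\{P\in\Sing(C):m_P=k\}$. This yields at once $N_k=\#\{\text{genuine points of }Z_k\}$, and the base case of the induction (Step \ref{Step1}): for the largest $k$ with $X_k\neq\emptyset$ there are no singularities of higher multiplicity, every point of $Z_k$ is genuine, and $N_k=\#Z_k$, which is Equation \ref{Nk}.

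Next I would read off the branch data from the factorization stratification. If $q$ has exact type $\overline{\lambda}$ (the partition of $k$ giving the multiplicities of the roots of $E_q$, i.e. the branch multiplicities of the corresponding singularity), then by Remark \ref{containementR} one has $q\in\mathcal{R}_{\overline{\lambda}'}$ precisely when $\overline{\lambda}\preceq\overline{\lambda}'$; a M\"obius inversion over the poset of partitions then turns the directly computable incidence numbers $\#(Z_k\cap\mathcal{R}_{\overline{\lambda}'})$ into the number of points of $Z_k$ of each exact type. The combinatorial input is the sub-divisor count: the degree-$(k-1)$ sub-divisors of a type-$\overline{\lambda}$ divisor are obtained by the ``subtract $1$'' operation of Definition \ref{ancestor}, and a type-$\overline{\sigma}$ sub-divisor arises in exactly $n_{\overline{\lambda},\overline{\sigma}}$ ways. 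The induction now runs downward: assuming the genuine singularities of every multiplicity $>k$ are known together with their types, I would compute, for each partition $\overline{\sigma}$ of $k$, the number of spurious points of $Z_k$ of type $\overline{\sigma}$ by summing the sub-divisor contributions of all higher parent singularities, subtract these from the exact-type counts of $Z_k$ to isolate the genuine points, sum over $\overline{\sigma}$ to get $N_k$, and finally set $N=\sum_{k=2}^{n-1}N_k$.

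The step I expect to be the main obstacle is the clean counting of the spurious points, i.e. ensuring nothing is double counted across the induction. The naive idea of generating the spurious points of $Z_{k-1}$ as the ``children'' (subtract-$1$ sub-divisors) of the points of $Z_k$ overcounts, because one degree-$(k-1)$ sub-divisor of a given $D_P$ can be completed inside $D_P$ to a degree-$k$ sub-divisor in several ways, and so is a child of several points of $Z_k$; the number of such completions depends on the position of the sub-divisor in its parent and not merely on its type, so there is no configuration-independent consecutive-level weight. The resolution, which the proof must make explicit, rests again on the unique-parent property: each spurious point lies in exactly one $D_P$, so the correct count is obtained by summing sub-divisor numbers over the \emph{parents} (the higher genuine singularities supplied by the induction) rather than over the intermediate scheme $Z_k$. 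Equivalently --- and this is the form underlying the remark preceding Definition \ref{ancestor} --- at the level of supports a point of $Z_k$ is spurious if and only if it is a child of some point of $Z_{k+1}$, a purely set-theoretic coverage condition that is insensitive to the number of completions; since a proper sub-divisor of $D_P$ can always be enlarged by one point inside $D_P$, coverage by the single adjacent scheme $Z_{k+1}$ already detects every spurious point. Checking that these two viewpoints agree, and that the incidence data $\#(Z_k\cap\mathcal{R}_{\overline{\lambda}})$ together with the numbers $n_{\overline{\lambda},\overline{\sigma}}$ suffice to carry out the count, is where the substance of the argument lies.
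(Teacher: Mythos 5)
Your proposal follows the same architecture as the paper's proof: the dictionary of Proposition \ref{Singpoint} between points of $Z_k$ and degree-$k$ sub-divisors of the contracted divisors $D_P$, the genuine/spurious dichotomy (the paper's $Z_k\setminus Z'_k$ versus the $Z'_k$ of Notation \ref{Z'}), the base case $N_k=\sharp Z_k$ at the top level, and the stratification by exact partition type via inclusion--exclusion over the poset of Remark \ref{containementR}. Where you genuinely depart from the paper is in the counting of the spurious points, and your version is the more careful one. The paper removes the spurious points of $Z_{j}$ by subtracting the consecutive-level weighted sum $\sum_{\overline\lambda}N_{j+1,\overline\lambda}\,n_{\overline\lambda,\overline\sigma}$ (Step \ref{Step3} of Algorithm \ref{trueAlg1}) and then iterates this downward; as you observe, that is exact only one level below the maximal multiplicity, because a degree-$j$ sub-divisor $E$ of a divisor $D_P$ with $m_P\geq j+2$ is a child of $\sharp\Supp(D_P-E)$ distinct points of $Z_{j+1}$. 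Concretely, for a single ordinary point of multiplicity $4$ one has $\sharp Z_3=4$ and $\sharp Z_2=6$, all spurious, yet the iterated formula subtracts $N_{3,(1,1,1)}\cdot n_{(1,1,1),(1,1)}=4\cdot 3=12$ (and subtracts nothing if one uses $N'_{3,(1,1,1)}=0$ instead); neither gives $N_2=0$. None of the paper's worked examples exercises this case, since their maximal multiplicity is $3$. Your repair --- carry the exact types of the genuine singularities of every multiplicity $>j$ through the induction and, for each, subtract the number of its degree-$j$ sub-divisors of each type --- is correct, and it rests precisely on the separation statement you isolate (disjoint fibres give each spurious point a unique parent singularity, and distinct sub-divisors give distinct points of $\PP^j$). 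Two cautions. First, you should write out the sub-divisor count explicitly, namely the number of tuples $(\mu_1,\ldots,\mu_\ell)$ with $0\leq\mu_i\leq\lambda_i$ and $\sum_i\mu_i=j$ realizing a given type; the quantity $n_{\overline\lambda,\overline\sigma}$ of Definition \ref{ancestor} only covers the consecutive level. Second, your closing suggestion that the set-theoretic coverage criterion is equivalent should be weakened: coverage by $Z_{j+1}$ does correctly identify \emph{which} support points are spurious, but it does not by itself recover their \emph{number} from the incidence data $\sharp(Z_j\cap\mathcal{R}_{\overline\lambda})$, which is exactly why the parent-summing formula is the one your induction actually needs.
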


\begin{proof} {Let $Z_k$ be the support of $X_k$ as in Notation \ref{Zk}.}
Before we give the steps for the algorithm, let us recall how the points in $Z_k\subset \PP^k$ are related to those in $Z_{k-1}\subset \PP^{k-1}$ (for $k\geq 3$). 

 If $R\in Z_k$, $k\geq 3$, $R=(a_0:\ldots :a_k)$ parameterizes a linear space $H_R\cong \PP^{k-1}$, $H_R\subset \PP^n$ which is $k$-secant to ${C}_n$ and which gets  contracted by $\pi$ to a singular point of $C$.
 
Actually, if $G_R=a_0s^k+a_1s^{k-1}t+\cdots+a_kt^k\in K[s,t]_k$ and $G_R = L_1^{\lambda_1}\cdots L_k^{\lambda_k}$ is a linear factorization of $G_R$, where $\overline{\lambda} = (\lambda_1,\ldots,\lambda_k)$ is a partition of $k$, this defines a divisor $D_R={\lambda_1}Q_1+\cdots+{\lambda_k}Q_k$ on ${C}_n$, whose linear span in $\PP^n$ is $H_R$. 

For all $i=1,\ldots ,k$ such that $\lambda_i>0$,
the divisor 
$D_{R,i}= \lambda_1Q_1+\cdots+(\lambda_i-1)Q_i+\cdots+\lambda_kQ_k$ 
defines a $(k-1)$-secant space $H_{i}\cong \PP^{k-2}$, which corresponds  to a point $R_i\in Z_{k-1}$ via the binary form $G_{i}=L_1^{\lambda_1}\cdots L_{i}^{\lambda_i-1}\cdots L_k^{\lambda_k}$. We can say that $D_{R,i}$ is defined by a partition
 $\overline{\sigma}=(\sigma_1, \ldots , \sigma_{k-1})$ of $k-1$ such that $\overline{\lambda}=(\lambda_1, \ldots , \lambda_k)$ is an ancestor of $\overline{\sigma}$ as in Definition \ref{ancestor}.

\begin{notation}\label{Z'} 
For each $k=3,\ldots ,n-1$, let $Z'_{k-1}$ be the subset of $Z_{k-1}$ made of those points (as $R_i$ above), which ``~come from $Z_k$~" (i.e. which correspond to a $H_{i}\subset H_R$, with $R\in Z_k$).

For each $R\in \Supp(Z_{k})$ defining a $k$-secant space $$H_R=\langle O^{\lambda_1-1}_{Q_1}(C_n),\cdots,O^{\lambda_k-1}_{Q_k}(C_n)\rangle,$$ 
 we have that,  for each $\lambda_i>0$, we can find  points $R_i\in Z_{k-1}$ such that 
 $$H_{R_i}=\langle O^{\lambda_1-1}_{Q_1}(C_n),\cdots,O^{(\lambda_i-1)-1}_{Q_i}(C_n),\cdots,O^{\lambda_k-1}_{Q_k}(C_n) \rangle.$$
\noindent Those are the points in $Z'_{k-1}$.
\end{notation}

All this description shows that if a point $R$ is in $Z_{k-1}'\subset Z_{k-1}$ then $R\in Z_{k-1}\cap\mathcal{R}_{\overline{\sigma}} $ for $\overline{\sigma}$ minimal with respect to the order defined above, and such that there exists an ancestor $\overline{\lambda}$  of $\overline{\sigma}$ (partitions of $k$ and of $k-1$ respectively) and  $Z_{k}\cap \mathcal{R}_{\overline{\lambda}}\neq \emptyset $. 

The points in $Z_{k-1}'$ must not be counted in determining  the amount of the singularities of order $k-1$. 

In the following algorithm we will count exactly all the singularities of a plane rational curve for each multiplicity, and Step \ref{Step3}  will compute the exact contribution of each $Z_j$ to the singularities of multiplicity $j$ by excluding  $Z'_{j}$, coming from points in some  $Z_k$, $k>j$, and some ancestor partition as just described.

\smallskip

With these notations settled, one can now read Algorithm \ref{trueAlg1} below.
 {\small{
\begin{algorithm}[H]\caption{Number of Singularities }\label{trueAlg1}
\INPUT: $M_i\in \mathbb{C}^{(n-k+4)\times (n+1)}$, $i=2, \ldots , n-1$ (see \eqref{trueMk}).
\OUTPUT Number of singular points of $C$ of multiplicity $k$.

\begin{enumerate}
\item \label{Step1}  Step \ref{Step1}.
\end{enumerate}
\begin{boxedminipage}{125mm}
    \begin{algorithmic}[1]
	\FOR {$ i =2, \ldots , n-1$}  
		\STATE \label{Step1b}   $I_{X_i}:=\left((n-i+3)\hbox{-minors of } M_i\right)$.	
	\ENDFOR
	\STATE\label{Step1a} {let $k$ be the biggest $i$ s.t. $I_{X_i}$ is not irrelevant.}
	\FOR {$ i =2, \ldots , k$}
		\STATE compute the radical ideals $I_{Z_i}:=\sqrt{I_{X_i}}$.\label{Step1c} 
	\ENDFOR
	\STATE\label{Step1d} Define  $N_k:=\sharp Z_k$(cf. \eqref{Nk}).
\end{algorithmic}
\end{boxedminipage}

\begin{enumerate}[resume*]
\item\label{Step2} Step \ref{Step2}.
\end{enumerate}

\begin{boxedminipage}{125mm}
\begin{algorithmic}[1]
	\STATE\label{Step2a} {let $\mathcal{R}_{{\overline{\lambda}}}\subset \mathbb{P}^k$ be the variety introduced in Notation \ref{Not1}}. 
	\FOR {any partition $\overline{\lambda}$  of $k$}
		\STATE\label{Step23} $N_{k,\overline{\lambda}} := \sharp \left((Z_k\cap \mathcal{R}_{\overline{\lambda}}) - \cup_{(\overline{\lambda}'\prec \overline{\lambda})} \mathcal{R}_{\overline{\lambda}'} \right)$, the number of points in $Z_k$ corresponding to binary forms of degree $k$ whose linear factorization has exactly the  $\lambda_i$'s as exponents (we have to exclude all the $\mathcal{R}_{\overline{\lambda}'}$ if $\overline{\lambda}'\prec \overline{\lambda}$, see Remark \ref{containementR}). 
	\ENDFOR
	\STATE Define  $$\mathcal{A}_k:=\{\overline{\lambda} \hbox{ partition of } k \,|\,N_{k,\overline{\lambda}}>0 \}.$$
\end{algorithmic}
\end{boxedminipage}

\begin{enumerate}[resume*]
\item\label{Step3} Step \ref{Step3}.
\end{enumerate}

\begin{boxedminipage}{125mm}
\begin{algorithmic}[1]

\FOR {any $\overline{\lambda}\in \mathcal{A}_k$}
\STATE set $\mathcal{B}_{\overline{\lambda}}:=\{\overline{\sigma} \hbox{ partition of } k-1 \,|\, \overline{\lambda} \hbox{ is an ancestor of }\overline{\sigma}\}$.\label{Step3a+}
\ENDFOR
\FOR {each partition $\overline{\sigma}\in \mathcal{B}_k$} 
\STATE\label{Step36} $N_ {k-1,\overline{\sigma}}$ analogously to {Step \ref{Step2}.\ref{Step23}.} \label{Step3a}
\ENDFOR
\FOR { all $\overline{\lambda}\in \mathcal{A}_k$ and all  $\overline{\sigma}\in \mathcal{B}_k$  such that $N_{k-1, \overline{\sigma}}>0$.}
\STATE
  $n_{\overline{\lambda}, \overline{\sigma}}$ as in Definition \ref{ancestor}  \label{Step3c}
\ENDFOR
\STATE\label{Step3d} let $N'_ {k-1,\overline{\sigma}} := N_ {k-1,\overline{\sigma}}- \sum_{\overline{\lambda}\in {A_k}} {N_{k, \overline{\lambda}}\cdot}n_{\overline{\lambda}, \overline{\sigma}}$ 
\STATE  define $$N_{k-1}: = \sum_{\overline{\sigma}\in \{\hbox{\footnotesize{partitions of} } k-1\}}  N'_ {k-1,\overline{\sigma}},$$ \label{Step312}
Then {$N_{k-1}=\sharp ( Z_{k-1} \setminus Z'_{k-1})$}.
\end{algorithmic}
\end{boxedminipage}

\begin{enumerate}[resume*]
\item\label{Step4} Step \ref{Step4}.
\end{enumerate}

\begin{boxedminipage}{125mm}
\begin{algorithmic}[1]
\STATE repeat Steps \ref{Step2} and \ref{Step3} in order to get $N_{k-2},\ldots,N_2$. 
\end{algorithmic}
\end{boxedminipage}

\begin{enumerate}[resume*]
\item\label{Step5} Step \ref{Step5}.
\end{enumerate}

\begin{boxedminipage}{125mm}
\begin{algorithmic}[1]
\STATE Compute
$$N= N_k\ +\ N_{k-1}+\cdots +\ N_2.$$
This is the number of singular points of $C$. Moreover, each $N_k$ is the number of singular points of $C$ of multiplicity $k$. 

\end{algorithmic}

\end{boxedminipage}
\end{algorithm}
}}
\end{proof}

\begin{rem}\label{neededforAlg1}

For the algorithm we need to find $(Z_k\cap \mathcal{R}_{\overline{\lambda}})$, hence we need to compute $I_{Z_k}+ I_{ \mathcal{R}_{\overline{\lambda}}}\subset K[x_0,\ldots,x_k]$. 

For the computation of the ideal $I_{ \mathcal{R}_{\overline{\lambda}}}$, see Remark \ref{idealRlambda}, while for $I_{Z_k}$ we have the determinantal ideal of $X_k$, so we just compute its radical, e.g. via COCOA  \cite{COCOA} or Macaulay 2 \cite{m2}. With the same programs we can compute the sum of the two ideals.
 
For the reader who is familiar with numerical computations, notice that if one knows the coordinate of $Z_k$, a way of testing if an element belongs to $\mathcal{R}_{\overline{\lambda}}$  is given, for a particular example, in  \cite[\S 5.6]{BDHM} via homotopy continuation with \cite{Bertini}.
\end{rem}

\begin{ex}[This is Algorithm \ref{trueAlg1} on Example \ref{6tic3triple}]\label{exBis} Let $C\subset \mathbb{P}^2$  be  the sextic curve already considered in Example \ref{6tic3triple}.

    \begin{itemize}
\item [Step \ref{Step1}:]
We do not write $I_{X_3}$ here for brevity,
anyway one can see that the scheme $X_3$ has degree 3 and it is reduced.
\item [Step \ref{Step1}.\ref{Step1a}:]
The biggest $k$ such that $X_k$ is not empty is $k=3$.
\item [Step \ref{Step1}.\ref{Step1c}:] 
$I_{Z_3}=\sqrt{I_{X_3}}=(x_0-15623x_1-252x_2+1386x_3,x_1x_3+9137x_2x_3-11429x_3^2,x_1x_2+10667x_2^2+5210x_2x_3+7937x_3^2,x_1^2-14224x_2^2-13900x_2x_3-
      3410x_3^2)$ .
\item[Step \ref{Step1}.\ref{Step1d}] $K[x_0.x_1,x_2,x_3]/I_{Z_3}$ has Hilbert polynomial equal to 3, hence $N_3=3$ and $C$ possesses three triple points.

\item [Step \ref{Step2}:] 
The intersection of $X_3$ with both $\mathcal{C}_3=\mathcal{R}_{(3,0,0)}$ and $\tau(\mathcal{C}_3)= \mathcal{R}_{(2,1,0)}$ is empty (Step \ref{Step2}.\ref{Step2a}),  
 hence we have to keep track only of $\overline{\lambda}=(1,1,1)$ since $N_{3,(1,1,1)}=3$.
\item [Step \ref{Step3}.\ref{Step3a+}:]
The only partition $\overline{\sigma}$ of 2 which has as
 $\overline{\lambda}=(1,1,1)$ as ancestor, is $\overline{\sigma}=(1,1)$, i.e. $\mathcal{B}_{(1,1,1)}=\{(1,1)\}$.
\item [Step \ref{Step3}.\ref{Step3a}:] 
Since $ \mathcal{R}_{(1,1)}= \mathbb{P}^2$, in order to compute $N_{2,(1,1)}$ we have to understand how $Z_2$ intersects $\mathcal{R}_{(2,0)}$. One can easily compute that $\length (X_2)=\length (Z_2)=10$ and that $X_2\cap \mathcal{R}_{(2,0)}=\emptyset$. Hence  $N_{2,(1,1)}=10-0=10$.
\item [Step \ref{Step3}.\ref{Step3c}:] 
We have that  $n_{(1,1,1),(1,1)}=3$.
\item [Step \ref{Step3}.\ref{Step3d}:] 
$N'_{2,(1,1)}=N_{2,(1,1)}-N_{3,(1,1,1)}\cdot n_{(1,1,1),(1,1)}=10-3\cdot 3=1$,
\item[Step \ref{Step3}.\ref{Step312}:]
$N_{2}=1$. Hence 9 of the 10 simple points of $X_2$ come from $X_3$, and $C$ has only one double point. 
\item[Step \ref{Step4}:] There is no need to run this step in this example.
\item [Step \ref{Step5}:]  $N=3+1=4$: the curve $C$ has 4 singular points:  three ordinary triple points and an ordinary node (the fact that the they are ordinary singularities is a consequence of Proposition \ref{cuspidal}, see Example \ref{6tic3triple}).
\end{itemize}
\end{ex}

Actually, Algorithm \ref{trueAlg1} says more about the singularities of $C$ than just their number and multiplicities.

\begin{defn}\label{branchDef} Let $P\in \Sing C$; let $Q_1, \ldots , Q_s\in C_n$ be the points such that $\pi|_{C_n}(Q_i)=P$, then we say that $C$ has $s$ branches at $P$. Moreover let $\left(\pi|_{C_n}\right)^{-1}(P)=\lambda_1Q_1 + \cdots + \lambda_sQ_s$ as divisors in $C_n$, then we say that the $i$-th branch of $C$ has multiplicity $\lambda_i$ at $P$.
\end{defn}

\begin{cor}\label{branches}
Let $Z_j$ and $Z'_j$ as in Notation \ref{Zk} and \ref{Z'} respectively. If $R\in Z_j \setminus Z_j'$ and  $\overline{\lambda} =(\lambda_1,\ldots ,\lambda_j)$ is minimal (with respect to the order defined above) in order to have $R\in \mathcal{R}_{\overline{\lambda}}$, then the number of branches of $C$ at the singular point $P\in C$ associated to $R$ (i.e. $P=\pi(H_R)$) is the number  $l\left(\overline{\lambda}\right)$, the number of the $\lambda_i$'s different from zero in the partition  $\lambda$.
Moreover, for $i=1,\ldots ,l(\overline{\lambda})$, the $i$-th branch has multiplicity $\lambda_i$ at $P$.
\end{cor}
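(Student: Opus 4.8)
The plan is to unwind the definitions set up in the preliminaries and in Proposition \ref{Algorithm1}, and to show that the combinatorial data attached to the point $R$ reads off directly as the branch data of the singularity $P=\pi(H_R)$. First I would recall the correspondence established earlier: a point $R\in Z_j$ corresponds to a binary form $G_R\in K[s,t]_j$, and a linear factorization $G_R=L_1^{\lambda_1}\cdots L_j^{\lambda_j}$ with $\overline{\lambda}$ a partition of $j$ defines a divisor $D_R=\lambda_1 Q_1+\cdots+\lambda_j Q_j$ on $C_n$, whose linear span in $\PP^n$ is the $j$-secant space $H_R$. The roots $L_i$ of $G_R$ are exactly the points $Q_i=\nu_n(L_i)\in C_n$, so the distinct roots of $G_R$ are in bijection with the distinct points $Q_i$ that the projection $\pi|_{C_n}$ sends to $P$.

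The key observation is to identify the minimal $\overline{\lambda}$ with the \emph{true} multiplicity structure of $G_R$, and then to match that against Definition \ref{branchDef}. By Remark \ref{containementR}, $R\in\mathcal{R}_{\overline{\lambda}}$ iff $G_R$ admits a factorization with exponent pattern dominated by $\overline{\lambda}$; taking $\overline{\lambda}$ minimal in the partial order $\preceq$ forces $\overline{\lambda}$ to be the exact partition recording the multiplicities of the distinct linear factors of $G_R$. Thus the number $l(\overline{\lambda})$ of nonzero parts equals the number of distinct roots of $G_R$, which equals the number of distinct points $Q_i\in C_n$ lying over $P$, i.e. the number of branches $s$ in Definition \ref{branchDef}. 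Moreover the exponent $\lambda_i$ attached to the root $L_i$ is precisely the multiplicity with which $Q_i$ appears in the divisor $D_R=\left(\pi|_{C_n}\right)^{-1}(P)$, so $\lambda_i$ is the multiplicity of the $i$-th branch at $P$, again matching Definition \ref{branchDef}.

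The one point requiring care — and the main obstacle — is to justify that the divisor $D_R=\left(\pi|_{C_n}\right)^{-1}(P)$ is genuinely $\lambda_1 Q_1+\cdots+\lambda_s Q_s$ and not something larger or smaller, i.e. that the scheme-theoretic fiber of $\pi|_{C_n}$ over $P$ is captured exactly by the minimal $\overline{\lambda}$ of $R$. Here I would invoke the hypothesis $R\in Z_j\setminus Z_j'$: if $D_R$ contained extra infinitely-near structure or $P$ had higher multiplicity, then $H_R$ would be a proper secant subspace of a larger contracted space $H_{R'}$ with $R'\in Z_{j'}$, $j'>j$, and $R$ would then lie in $Z_j'$ by Notation \ref{Z'}, contradicting $R\notin Z_j'$. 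This is exactly the mechanism by which Step \ref{Step3} of Algorithm \ref{trueAlg1} removes the points that do not record the full singularity. Hence for $R\in Z_j\setminus Z_j'$ the space $H_R$ is maximal among contracted secant spaces through $P$, so $H_R\cap C_n=D_R$ is the entire fiber, and the minimal $\overline{\lambda}$ reads off the complete branch data.

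Finally I would assemble these identifications: the minimal $\overline{\lambda}$ gives $l(\overline{\lambda})=s$ branches with multiplicities $\lambda_1,\ldots,\lambda_{l(\overline{\lambda})}$, which is the assertion of the corollary. The argument is essentially bookkeeping on top of the geometric dictionary already built in Proposition \ref{Algorithm1}; no new computation is needed beyond verifying that the minimal partition in the $\preceq$-order coincides with the exponent multiset of the reduced factorization of $G_R$, which is immediate from the description of $\mathcal{R}_{\overline{\lambda}}$ in Notation \ref{Not1}.
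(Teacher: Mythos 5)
Your argument is correct and follows essentially the same route as the paper's (one-sentence) proof: read the branch data of $P$ off the divisor $D_R=\lambda_1Q_1+\cdots+\lambda_jQ_j$ that $H_R$ cuts on $C_n$, matching it against Definition \ref{branchDef}. You are in fact more careful than the paper on two points it leaves implicit — that minimality of $\overline{\lambda}$ in the order $\preceq$ pins down the exact exponent multiset of $G_R$, and that the hypothesis $R\notin Z_j'$ is what guarantees $D_R$ is the whole fiber $\left(\pi|_{C_n}\right)^{-1}(P)$ rather than a piece of a larger contracted secant space — so no gap.
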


\begin{proof}
If $H_R = \langle O^{\lambda_1-1}_{Q_1}(C_n),\ldots ,O^{\lambda_j-1}_{Q_j}(C_n)\rangle $, then for each $Q_i$ we have $\pi(O^{\lambda_i-1}_{Q_i}(C_n))=P$. Hence each $Q_i$ gives a different branch of $C$ at $P$, whose multiplicity is actually $\lambda_i$.
\end{proof}

\begin{algorithm}[H]\renewcommand{\thealgorithm}{1.1}\caption{Branch Structure}\label{AggiuntaAlg1}
\textbf{Input:} $M_i\in \mathbb{C}^{(n-k+4)\times (n+1)}$, $i=2, \ldots , n-1$ (see \eqref{trueMk}).
\\
\textbf{Output:} Number of branches.
\begin{boxedminipage}{125mm}
    \begin{algorithmic}[1]
    \STATE Repeat Algorithm \ref{trueAlg1}, from Step \ref{Step1} to Step \ref{Step3}.\ref{Step3d} in order to get all $N_{k,\overline{\lambda}}$ (of Algorithm \ref{trueAlg1} Step \ref{Step2}.\ref{Step23}) and  $N'_{k-1,\overline{\sigma}}$ (of Algorithm \ref{trueAlg1} Step \ref{Step3}.\ref{Step36}) for every partition $\overline{\lambda}$ of $k$ and $\overline{\sigma}$ of $k-1$;
    \FOR {$j=(k-2),\ldots,2$}
\STATE Repeat Algorithm \ref{trueAlg1} from Steps \ref{Step2} to \ref{Step3}.\ref{Step3d} in order to get all $N'_{j,\overline{\sigma}}$ for every $j=k-2,\ldots , 2$ and any partition $\overline{\sigma}=(\sigma_1, \ldots , \sigma_j)$ of $j$; 
\ENDFOR
\STATE Set $N'_{k,\overline{\lambda}}:=N_{k,\overline{\lambda}}$;
\STATE Each $N'_{j,\overline{\sigma}}$ gives the number of points in $\Sing (C)$ having multiplicity $j$ and $l(\overline{\sigma})$ branches with multiplicity $\sigma_i$, $i=1, \ldots , l(\overline{\sigma})$, at each branch.
    \end{algorithmic}
\end{boxedminipage}

\end{algorithm}

\medskip

\begin{ex}\label{5tic4plebibranched}
Let $C$ be the quintic curve defined by:
$$\left\{ \begin{array}{l}
x=s^5 + s^3t^2 - s^2t^3+t^5 \\
y=s^3t^2 + s^2t^3 \\
z= s^3t^2 - s^2t^3 
\end{array}
\right..
$$
Computing the ideal $I_{X_4}$, one finds that $I_{X_4}$ is a radical ideal, with support at the point $R=(0:0:1:0:0)$. Since $\deg(C)=5$, the point $P\in \Sing(C)$, of multiplicity 4, corresponding to $R$ is the only singular point of $C$. We have that $R$ parameterizes the form $s^2t^2$, and $H_R\cong \PP^3$ (see Notation \ref{Z'}) is a 4-secant space spanned by the divisor (on $C_5$), $2(1:0:0:0:0:0)+2(0:0:0:0:0:1)$. Moreover, $R\in \mathcal{R}_{\overline{\lambda}}$, with $\overline{\lambda}= (2,2,0,0)$. Since $N_{4,\overline{\lambda}}=1$, and $l\left(\overline{\lambda}\right)=2$, we have that
 $P\in C$ has two branches, each of them of multiplicity 2 at $P$ (at $P$, the curve $C$ appears as the union of two different cusps).  
\end{ex}

\medskip

\begin{notation} If $C$ is a plane rational curve of degree $n$, we set, for each $k=2,\ldots,n-1$: 
$$\Sing_k(C) := \{P\in C\ | \ P\ \hbox{ is\ singular\ of\ multiplicity\ }k\} \subset \PP^2.$$ 
\end{notation}

There is another way to compute the number of singularities of $C$, another algorithm which gives directly the ideals, in $\PP^2$, of the (reduced) sets $\Sing(C)$ and Sing$_k(C)$, $k=2,\ldots , n-1$. 

\bigskip

\begin{prop}\label{Algorithm2} Let $C$, $X_{n-1},\ldots,X_3,X_2$ be as in Definition \ref{defXk}. Then Algorithm \ref{Algorithm2True}, based on the structure of $X_{n-1},\ldots,X_3, X_2$ and on the projection $\pi :\ C_n \rightarrow C$, computes the radical ideals of the reduced sets $\Sing(C)$, $\Sing_2(C),\ldots , \Sing_{n-1}(C)$ and also allows one to compute their cardinalities $N, N_2,\ldots,N_{n-1}$. 
\end{prop}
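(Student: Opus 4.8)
The plan is to produce each singular point of $C$ directly in $\PP^2$ as the $\pi$-image of a secant space recorded by the schemes $X_k$, and then to separate these images according to multiplicity. First I would fix, for every $k$, the reduced support $Z_k=\Supp(X_k)$ of Notation \ref{Zk} and introduce the map $\psi_k\colon Z_k\to\PP^2$ sending $R\in Z_k$ to $P_R:=\pi(H_R)$, where $H_R\cong\PP^{k-1}$ is the $k$-secant space to $C_n$ parameterized by $R$. I would check this map is well defined: since $f_0,f_1,f_2$ have no common zero one has $\Pi\cap C_n=\emptyset$, so $H_R\not\subseteq\Pi$, and by the defining property of $X_k$ established in Proposition \ref{Singpoint} the space $H_R$ is contracted by $\pi$ to a single point $P_R\in\Sing(C)$.

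The central step I would prove is the set-theoretic identity $W_k:=\psi_k(Z_k)=\bigcup_{m\ge k}\Sing_m(C)$. For the inclusion $\subseteq$, if $G_R=\sum_i x_i s^{k-i}t^i$ is the binary form attached to $R$, then any root $Q\in\PP^1$ of $G_R$ gives $\nu_n(Q)\in H_R$, so $\mathbf{f}(Q)=\pi(\nu_n(Q))=P_R$; thus $P_R$ is singular, and its multiplicity is $\ge k$ because the degree-$k$ divisor $H_R\cap C_n$ is contained in the fiber $(\pi|_{C_n})^{-1}(P_R)$, whose degree is the multiplicity of $P_R$. For $\supseteq$, a point $P$ of multiplicity $m$ is contracted from an $(m-1)$-dimensional $m$-secant space $H$, and for each $k\le m$ any degree-$k$ subdivisor of $H\cap C_n$ spans a $(k-1)$-space $H'\subseteq H$, which is $k$-secant and still contracted to $P$, so $P\in W_k$; for $k>m$ no such subdivisor exists, so $P\notin W_k$. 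This yields the nested chain $W_2\supseteq W_3\supseteq\cdots$ with $W_2=\Sing(C)$, and identifies $\Sing_k(C)=W_k\setminus W_{k+1}$.

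The computational content then splits in two. To obtain the radical ideal of $W_k\subset\PP^2$ I would perform an image (elimination) computation: form the incidence $\Gamma=\{(Q,R)\in\PP^1\times\PP^k:\sum_i x_i\,s^{k-i}t^i=0\}$, restrict to $\PP^1\times Z_k$, and push forward along $\mathbf{f}$; equivalently, adjoin to $I_{Z_k}$ the root condition and the equations $w_u=\sum_p a_{up}z_p$ coming from $\nu_n$ and $\pi$, and eliminate the $\PP^1$ (resp.\ $\PP^n$) coordinates. Because $W_k$ is finite and the maps involved have no base points on the loci actually used, the closure of the image equals $W_k$, so the elimination ideal cuts it out and its radical is $I_{W_k}$. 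Finally, since the $W_k$ are nested reduced finite sets, I would recover each stratum by a colon/saturation, $I_{\Sing_k(C)}=I_{W_k}:I_{W_{k+1}}^{\infty}$, with $I_{\Sing(C)}=I_{W_2}$, and read the cardinalities $N_k=\#\Sing_k(C)$ and $N=\#\Sing(C)$ off the (constant) Hilbert polynomials of these $0$-dimensional schemes.

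The hard part will be the image step: proving that the elimination ideal defines $W_k$ set-theoretically and that passing to its radical yields exactly $I_{W_k}$. I expect two issues to require care—first, that the rational maps in play ($\pi|_{H_R}$ and $\mathbf{f}$) are defined at every point actually used, which is guaranteed by $\Pi\cap C_n=\emptyset$ and properness of the parameterization and rules out spurious points entering through indeterminacy; and second, that elimination returns only the closure of the image, which for a finite image coincides with the image itself. Once these are settled, the identity $\Sing_k(C)=W_k\setminus W_{k+1}$ together with the standard colon-ideal description of the difference of reduced $0$-dimensional schemes finishes the argument, and agreement of the counts with those of Algorithm \ref{trueAlg1} follows from Proposition \ref{Algorithm1}.
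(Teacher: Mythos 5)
Your argument is correct and follows essentially the same route as the paper: the paper's ``proof'' is just the algorithm itself, whose steps compute exactly your $W_k=\bigcup_{m\ge k}\Sing_m(C)$ as the elimination-theoretic image of the contracted secant spaces over $Z_k$ (the ideals $I_L$ and then $J_{\ge k}$), and then extract $\Sing_k(C)$ via the colon ideal $J_{\ge k}:J_{\ge k+1}$ and the counts from Hilbert polynomials. Your write-up actually makes explicit the key set-theoretic identity and the finiteness argument for the image closure, which the paper leaves implicit.
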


\begin{proof}
Let us sketch the steps of the algorithm.
\end{proof}

{\small{
\begin{algorithm}[H]\setcounter{algorithm}{1}\caption{Ideals of Singularities}\label{Algorithm2True}
\textbf{Input}: $M_i\in \mathbb{C}^{(n-k+4)\times (n+1)}$, $i=2, \ldots , n-1$ (see \eqref{trueMk}).\\
\textbf{Output 1}: Ideal of $\Sing(C)$, hence also number $N$ of singular points of $C$.
\\
\textbf{Output 2}: Ideals of $\Sing_k(C)$, hence also numbers $N_k$ of singular points of $C$ with multiplicity $k=2,\ldots n-1$.
\begin{boxedminipage}{125mm}
    \begin{algorithmic}[1]
\STATE\label{Alg2Step1}  Compute the ideal $I_{X_2}\subset K[x_0,x_1,x_2]$ given by the maximal minors of $M_2$;
\STATE\label{Alg2Step12} Compute the ideal $I_{Z_2}:=\sqrt{I_{X2}}$ which defines the scheme $Z_2\subset \PP^2$, the (reduced) support of $X_2$;
\STATE\label{Alg2Step13} Consider $I_{Z_2}\subset K[x_0,x_1,x_2,z_0,\ldots,z_n]$;
\STATE\label{Alg2Step2}   Compute the ideal $U\subset K[x_0,x_1,x_2,z_0,\ldots,z_n]$ generated by the equations \eqref{eqn1}: $\sum_{i=0}^k x_iz_{i+j} = 0, \; \; j=0, \ldots , n-k;$
\STATE\label{Alg2Step25} Compute the ideal $I:=I_{Z_2}+U\subset K[x_0,x_1,x_2,z_0,\ldots,z_n]$, 
(this gives the fibers $p_1^{-1}(Z_2)$ in $Y$);
\STATE\label{Alg2Step22} Compute the saturation ideal $I'\subset K[x_0,x_1,x_2,z_0,\ldots,z_n]$ of $I$ with respect to $(x_0,x_1,x_2)$;
\STATE\label{Alg2Step3}   Compute the elimination ideal $I_L\subset K[z_0,\ldots,z_n]$ obtained by eliminating the variables  $x_0,x_1,x_2$ from  the ideal $I'$  (notice that $I_L$ is the ideal of the lines in $\PP^n$ parameterized by $Z_2$. These lines are exactly the secant and tangent lines to $C_n$ which get contracted via $\pi$ to the singular points of $C$);
\STATE\label{Algo2Step8} Consider $I_L\subset K[z_0,\ldots,z_n,w_0,w_1,w_2]$;
 \FOR {$u=0,1,2$}
 \STATE $G_u :=
w_u - a_{u0}z_0-\cdots-a_{un}z_n$ (see Equation \eqref{fk} and after, in the previous section)
\ENDFOR
\STATE Compute the ideal $G:=(G_0,G_1,G_2)\subset K[z_0,\ldots,z_n,w_0,w_1,w_2]$;
\STATE\label{Alg2Step4}  Compute the ideal $I'':=I_L+G\subset K[z_0,\ldots,z_n,w_0,w_1,w_2]$;
\STATE\label{Alg2Step5}  Compute the elimination ideal $I_N\subset K[w_0,w_1,w_2]$ obtained by eliminating the variables $z_0,\ldots,z_n$ from $I''$
(notice that $I_N$ is the ideal of the image via the projection $\pi$ of the lines obtained in Step \ref{Alg2Step3});
\STATE\label{Alg2Step52} Compute the ideal  $J_N:=\sqrt{I_N}$, i.e. the ideal of the (reduced) set $\Sing(C)$;
\STATE\label{Alg2Step6}  Compute the Hilbert polynomial  of $ K[w_0,w_1,w_2]/J_N$ in order to get the number $N$ of the singularities of $C$;
\STATE{If what was needed was the total number of singularities, the algorithm can STOP HERE. Otherwise one continues with next step}
\FOR {$k=3,\ldots,n-1$}
\STATE \label{Alg2Step7}   the same process of Steps \ref{Alg2Step1}--\ref{Alg2Step5}, starting with the ideal $I_{X_k}\subset K[x_0,\ldots,x_k]$, thus obtaining radical ideals $J_{\geq k}\subset  K[w_0,w_1,w_2]$, of the reduced set  of the singularities of $C$ of multiplicity at least $k$;
\ENDFOR
\FOR {$k=3,\ldots,n-1$}
\STATE\label{Alg2Step8}    Compute the ideal $J_k:=J_{\geq k}:J_{{\geq k+1}}$ to get the ideal of $\Sing_k(C)$, the set of singularities of order $k$;
\ENDFOR
\FOR {$k= 2,\ldots, n-1$}
\STATE\label{Alg2Step9} 
compute the Hilbert polynomial   of $K[w_0,w_1,w_2]/J_k$ in order to get the number $N_k$ of the singularities of $C$ of order $k$.  
\ENDFOR
\end{algorithmic}
\end{boxedminipage}
\end{algorithm}
}}

In order to help the reader in understanding similarities and differences between the above two algorithms we run Algorithm \ref{Algorithm2True} again on Example \ref{6tic3triple} that is the same as Example \ref{exBis}.

\begin{ex}\label{exTris}
Let $C\subset \mathbb{P}^2$  be  the sextic curve already considered in Examples \ref{6tic3triple} and  \ref{exBis}; let us analyze it using  Algorithm \ref{Algorithm2True}. We will give the steps of the computation in a Macaulay 2 \cite{m2} code to illustrate how one can effectively do the computations. At every step we specify
 what the command is doing. 

\begin{enumerate}[label=(\alph*)]
\item\label{primo}
	$R=QQ[x_0..x_2];\quad $ \ - -  This is the ring to use in $\PP^2$.
\item $f_0=4*s^6 - 16*s^5*t + 3*s^4*t^2 + 28*s^3*t^3 - s^2*t^4 - 6*s*t^5$; 
$f_1=4*s^5*t - 12*s^4*t^2 - 41*s^3*t^3 + 99*s^2*t^4 + 10*s*t^5 - 24*t^6$; 
	$f_2= s^5*t - 3*s^4*t^2 - 13*s^3*t^3 + 27*s^2*t^4 + 36*s*t^5$; \quad - - These are the equations giving the parameterization of $C$.
\item $M2=\mathrm{matrix}\{\{x_0,x_1,x_2,0,0,0,0\},\{0,x_0,x_1,x_2,0,0,0\},
 	\{0,0,x_0,x_1,x_2,0,0\},\{0,0,0,x_0,x_1,x_2,0\},$\\$\{0,0,0,0,x_0,x_1,x_2\}
    	,f_0,f_1,f_2\};$\quad - - This yields the matrix $M_2$ defining $X_2$.

	\item  $IX2=\mathrm{minors}(7,M2);$ \quad - - The ideal $I_{X_2}$.
	\item $IZ2=\mathrm{radical} ( IX2);$ \quad - - The ideal $I_{Z_2}$.
	\item $\mathrm{hilbertPolynomial}(IX2)$ 
	\\
	- - Answer: $10P_0$. \quad - - This implies $\length X_2=10$.
	\item $\mathrm{hilbertPolynomial}(IZ2)$
	\\
	- - Answer: $10P_0$. \quad  - - This implies that $\length X_2 =\length (Z_2)=10$ i.e. $X_2$ is reduced and contains 10 simple points.
	\item $S=QQ[x_0..x_2,z_0..z_6];$ \quad - - This is the ring of  $\PP^2\times \PP^6$.
	\item $I=\mathrm{sub}(IZ2,S);$ \quad - - $I$ is $I_{Z_2}$ as an ideal in the above ring (Step \ref{Alg2Step13}).
	\item $J=I+\mathrm{ideal}(x_0*z_0+x_1*z_1+x_2*z_2,x_0*z_1+x_1*z_2+x_2*z_3,x_0*z_2+x_1*z_3+x_2*z_4,x_0*z_3+x_1*z_4+x_2*z_5,x_0*z_4+x_1*z_5+x_2*z_6);$\quad  - - $J$ is the ideal of $Y\cap p_1^{-1}(Z_2)$ (Step 5), using equations (\ref{eqn1}).
	\item $I'=\mathrm{saturate}(J,\mathrm{ideal}(x_0,x_1,x_2));$ \quad - - (Step \ref{Alg2Step22}).
	\item $H=\mathrm{eliminate}(\mathrm{eliminate}(\mathrm{eliminate }(I',x_0),x_1),x_2);$\quad  - - We eliminate the $x_i$'s variables, in order to get an ideal $H$ in the $z_j$'s. 
	\item $T=QQ[z_0..z_6];$\quad - - The ring of coordinates of $\PP^6$.
	\item $IL=\mathrm{sub}(H,T);$\quad  - - We read the ideal $H$ as an ideal $I_L$ giving the secant lines to $C_6\subset \PP^6$ parameterized by $Z_2$ (Step \ref{Alg2Step3}).
	\item\label{o} $\mathrm{hilbertPolynomial} \; \mathrm{Proj}(T/IL)$
	\\- - Answer: $-9P_0+10P_1$.\quad  - - This implies that there are exactly 10 lines in $\mathbb{P}^6$ that get contracted to singular points of $C$.
 	\item $ZW=QQ[z_0..z_6,w_0..w_2];$\quad - - The ring we will use to project from $\PP^6$ to $\PP^2$.
	\item $G0=-4*z_0+16*z_1-3*z_2-28*z_3+z_4+6*z_5+w_0$;\\
$G1=-4*z_1+12*z_2+41*z_3-99*z_4-10*z_5+24*z_6+w_1$;\\
$G2=-z_1+3*z_2+13*z_3-27*z_4-36*z_5+w_2$;

 - - These equations allow one to see the $\PP^2$ containing $C$ inside $\PP^6$ as $\Pi ^\perp$.
\item $IZW=\mathrm{sub}(IL,ZW);$\quad - - Here we read the ideal $I_L$ of the ten lines in the new ring.
\item $I''=IZW+\mathrm{ideal}(G0,G1,G2);$  \quad - - We intersect the lines with the $\PP^2$.
	\item $IE=\mathrm{eliminate}(\mathrm{eliminate}(\mathrm{eliminate}(\mathrm{eliminate}(\mathrm{eliminate}(\mathrm{eliminate}(\mathrm{eliminate}(I'',z_0),z_1),z_2),z_3),z_4),$ \\$z_5),z_6);$ \quad - - We perform elimination to find the points in $\PP^2$ (Step \ref{Alg2Step5}).
	\item $W=QQ[w_0..w_2];$ \quad - - The ring of coordinates of $\PP^2$.
	\item $IN=\mathrm{sub}(IE,W);$ \quad - - The ideal of the singular points of $C$ in $\PP^2$.
	\item $JN=\mathrm{radical} (IN)$;\quad - - Its radical.
	
\item $\mathrm{hilbertPolynomial} \; JN$ \\
	- - Answer: $4P_0$. \quad - - This implies that $N=4$, i.e. $C$ has 4 singular points (Step \ref{Alg2Step6}).
	\end{enumerate}
	We stop here with this example, at Output 1, because we already know that $N_3=3$ and $N_2=1$.
\end{ex}

\medskip 
 \begin{rem} Let $C,N_2,\ldots,N_{n-1}$ be as in the Propositions above. Then $C$ does not possess  singular points infinitely near to other singular points if and only if
 $${n-1 \choose 2 } = \sum_{k=1}^{n-1} N_k{k\choose 2}.
 $$
 \end{rem}
 
  \medskip

This is an immediate consequence of the Clebsch formula, hence Algorithms \ref{trueAlg1} or \ref{Algorithm2True} may be used to check if there are infinitely near singularities.

\subsection{Coordinates of singularities}
 
Let us notice that  Algorithm \ref{Algorithm2True}  gives the ideal of the singular points of $C$, but if one is interested in computationally finding the coordinates of those points, their ideal may not be the best way to get them: it could be difficult to use numerical approximations. Things could be easier if we get the points in $\PP^1$ such that their images via the parameterization $(f_0:f_1:f_2)$ give the singular points. The following algorithm does that.

\begin{prop}\label{Algorithm3} Let $C, {\mathbf{f}}$ be as in the previous section. Then the  following algorithm computes the points in $\PP^1$ whose images via $\mathbf{f}$ are the singular points in $\Sing_j(C)$, for each $j$. 
\end{prop}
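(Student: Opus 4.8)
The goal of Proposition \ref{Algorithm3} is to produce, for each multiplicity $j$, the points of $\PP^1$ whose images under $\mathbf{f}$ are the singular points of $\Sing_j(C)$. The plan is to reverse the flow of Algorithm \ref{Algorithm2True}: instead of pushing the contracted secant lines forward via $\pi$ into $\PP^2$, I would pull the scheme $Z_j\subset \PP^k$ back to $\PP^1$. The key geometric fact, already established in the discussion preceding Definition \ref{defXk} and in Proposition \ref{Singpoint}, is that a point $R\in Z_j$ parameterizes a binary form $G_R\in K[s,t]_j$ whose roots (with multiplicity) are precisely the points $Q_1,\ldots,Q_s\in\PP^1$ that get identified by $\mathbf{f}$ to a single singular point $P=\pi(H_R)$. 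Thus the preimages under $\mathbf{f}$ of $\Sing_j(C)$ are exactly the roots of the forms parameterized by $Z_j$ (more precisely by $Z_j\setminus Z'_j$, in order to isolate the genuine multiplicity-$j$ points).

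First I would, using Algorithm \ref{trueAlg1} (Steps \ref{Step1}--\ref{Step3}) or its refinement, identify for each $j$ the relevant points of $Z_j$ that truly correspond to multiplicity-$j$ singularities, i.e. the points of $Z_j\setminus Z'_j$. For each such point $R=(a_0:\cdots:a_j)\in\PP^j$ I would write down the associated degree-$j$ binary form $G_R=a_0s^j+a_1s^{j-1}t+\cdots+a_jt^j$. The set of roots in $\PP^1$ of all these forms, as $R$ ranges over $Z_j\setminus Z_j'$, is exactly $(\pi|_{C_n})^{-1}(\Sing_j(C))$; applying $\mathbf{f}$ to these roots then recovers the coordinates of the singular points themselves. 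To make this effective without resolving coordinates symbolically, I would realize the incidence correspondence in $\PP^j\times\PP^1$ cut out by requiring $(s,t)$ to be a root of $G_R$, namely by the condition that $(s^j:s^{j-1}t:\cdots:t^j)$ evaluated against the coefficients of $G_R$ vanishes, together with the ideal $I_{Z_j}$ pulled back to $\PP^j\times\PP^1$; eliminating the $x_i$ variables yields an ideal in $K[s,t]$ whose zero locus is the desired finite subset of $\PP^1$.

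Concretely, the incidence condition ``$(s,t)$ is a root of $G_R$'' can be encoded via the $k$-uple Veronese map, since $R\in\mathcal{C}_j\subset\PP^j$ corresponds to a single repeated root, and more generally $G_R$ vanishes at $(s,t)$ exactly when the point $(s,t)\in\PP^1$ lies in the support of the divisor cut on $C_j$ (equivalently, when $(a_0,\ldots,a_j)\cdot(\text{appropriate Veronese/apolarity vector})=0$). Setting up this bilinear relation in $K[x_0,\ldots,x_j,s,t]$, adding $I_{Z_j}$, saturating with respect to the irrelevant ideal $(x_0,\ldots,x_j)$, and eliminating $x_0,\ldots,x_j$ should give a one-dimensional-in-$K[s,t]$ output, i.e. a finite set of points of $\PP^1$. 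Finally, substituting these into $\mathbf{f}$ produces the coordinates in $\PP^2$. Since elimination preserves the algebraic structure while allowing the final univariate (in the ratio $s/t$) polynomial to be solved numerically, this addresses the stated advantage over Algorithm \ref{Algorithm2True}: one ends with a binary form whose roots can be approximated numerically.

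The main obstacle I anticipate is twofold. The first difficulty is correctly encoding the apolarity/root-incidence relation between a point $R\in\PP^j$ (a degree-$j$ form up to scalar) and a root $(s,t)\in\PP^1$ in a way that is algebraically clean enough to eliminate, and in particular handling repeated roots: when the branch multiplicities $\lambda_i>1$, the naive vanishing condition has to be set up so that the elimination returns each geometric point of $\PP^1$ once (as a reduced point), which is why passing to the radical $I_{Z_j}$ and keeping track of $l(\overline{\lambda})$ via Corollary \ref{branches} matters. The second, and subtler, difficulty is bookkeeping across multiplicities: a point of $\PP^1$ lying over a multiplicity-$k$ singularity also appears as a root of forms parameterized in lower $Z_{k'}$; to attribute each preimage point to the correct stratum $\Sing_j(C)$ one must work with $Z_j\setminus Z'_j$ rather than all of $Z_j$, exactly as in Step \ref{Step3} of Algorithm \ref{trueAlg1}. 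Once these are handled, the remaining steps are routine ideal-theoretic operations (saturation and elimination) that standard computer algebra systems carry out directly.
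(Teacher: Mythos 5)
Your proposal is correct in its geometric content and reaches the right answer, but it takes a genuinely different computational route from the paper. The paper never works in $\PP^j\times\PP^1$: it first pushes $Z_k$ forward into $\PP^n$ (Steps \ref{Alg2Step1}--\ref{Alg2Step3} of Algorithm \ref{Algorithm2True} produce the ideal $I_{L_k}$ of the union of contracted $k$-secant spaces), intersects with $C_n$, and only then pulls back to $\PP^1$ by adjoining the Veronese equations $z_i-s^{n-i}t^i$, saturating and eliminating the $z$'s to obtain a single binary form $F_k$. You instead exploit directly the fact that $R=(a_0:\cdots:a_j)\in Z_j$ is the form $G_R=\sum a_is^{j-i}t^i$ whose roots are the fiber of $\mathbf{f}$ over the corresponding singular point, and you eliminate from the incidence $\sum x_is^{j-i}t^i=0$ in $K[x_0,\ldots,x_j,s,t]$; this is more economical (a much smaller ambient ring, no need for $I_{C_n}$) and makes the underlying identification transparent. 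The one place where the paper's route has a concrete advantage is the stratification by multiplicity: the paper observes that $F_{j+1}$ divides $F_j$ and simply takes the zeros of $F_j/F_{j+1}$, whereas you propose to restrict up front to $Z_j\setminus Z'_j$. That set is only determined by Algorithm \ref{trueAlg1} through cardinality counts and intersections with the $\mathcal{R}_{\overline{\lambda}}$'s, not as an ideal, so as written this step of yours is not directly implementable; the easy repair is to run your incidence construction on all of $Z_j$ (whose roots give the preimages of all singularities of multiplicity $\geq j$) and then adopt the paper's division trick, after which the two algorithms compute the same forms.
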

\begin{algorithm}[H]\caption{Coordinates of Singularities of $C$}\label{Algorithm3True}
 \textbf{Input}: $M_i\in \mathbb{C}^{(n-k+4)\times (n+1)}$, $i=2, \ldots , n-1$ (see \eqref{trueMk}).\\
\textbf{Output}: Points of each $\Sing_j(C)$ and their coordinates.

\begin{boxedminipage}{125mm}

    \begin{algorithmic}[1]

	\STATE\label{Alg3Step1}  Same as Step \ref{Alg3Step1} of Algorithm \ref{trueAlg1}: find $k= \max \{j \ |\ X_j \neq \emptyset\}$;
	\STATE\label{Alg3Step2}  Same as Steps from  \ref{Alg2Step1} to \ref{Alg2Step3} of Algorithm \ref{Algorithm2True} but starting from $I_{X_k}\subset K[x_0,\ldots,x_k]$, so to find the ideal $I_{L_k}\subset K[z_0,\ldots z_n]$ of the scheme $L_k\subset \PP^n$ which is the union of the $k$-secant  $(k-1)$-spaces of $C_n$ which get contracted by $\pi$;
	\STATE\label{Alg2Step2} Compute the ideal $I_{C_n} \subset K[z_0, \ldots , z_n]$ of the rational normal curve (given by the $2\times 2$-minors of the matrix $B_1$ in Equation (\ref{Bk}));
 	\STATE\label{Alg3Step4} Compute the ideal $I_{S_k} := I_{L_k}+I_{C_n}\subset K[z_0, \ldots , z_n]$ of the scheme $C_n\cap L_k$;
 	\STATE Consider $I_{S_k}\subset K[z_0,\ldots,z_n,s,t]$;
 	\STATE Compute the ideal $I_k:=I_{S_k}+(z_0-s^n,z_1-s^{n-1}t,\ldots,z_n-t^n)\subset K[z_0,\ldots,z_n,s,t]$;
	\STATE Compute the saturation ideal $J_k\subset K[z_0,\ldots,z_n,s,t]$ of $I_k$
 with respect to $z_0,\ldots,z_n$;
 	\STATE\label{Alg3StepF} Compute the (principal) elimination ideal $(F_k)\subset K[s,t]$  obtained  by eliminating the variables $z_0,\ldots,z_n$ from $J_k$;
	\STATE\label{Alg3Step7} Compute the zeroes of $F_k$ (this can be done by approximation), they give the points in $\PP^1$ whose image via ${\mathbf{f}}$ give $\Sing_k(C)$ (the points corresponding to non-ordinary $k$-secant spaces in $L_k$ will appear with higher multiplicity  as solutions of $F_k$);
 	\STATE\label{Alg3Step8} Apply ${\mathbf{f}}$ to the points found in Step \ref{Alg3Step7}  and get the (approximated) coordinates of the points of $\Sing_k(C)$ in $\PP^2$;
 	 \FOR {$j=k-1,\ldots ,2$,} \STATE \label{Alg3Step9} repeat Steps \ref{Alg3Step2} to \ref{Alg3StepF}, in order to find the form $F_j\in K[s,t]$ whose zeroes are the points in $\PP^1$ associated to $C_n\cap L_j$; since $F_{j+1}$ divides $F_j$,  determine (maybe by approximation) the zeroes of $F_j/F_{j+1}$; apply   ${\mathbf{f}}$ to such points, thus determining the coordinates of the points in $\Sing_j(C)$ for each $j$.
	\ENDFOR
    \end{algorithmic}
\end{boxedminipage}

\end{algorithm}

\begin{rem} Notice that if one only wants to determine the coordinates of the points in $\Sing(C)$, regardless to their multiplicities, then one can perform Algorithm \ref{Algorithm3True} just starting with $X_2$, instead of $X_k$; in this way one determines the form $F_2\in K[s,t]$ whose zeros will give (via the parameterization) all the points in $\Sing(C)$.
\end{rem}

\begin{ex}\label{exQuattro} This is Algorithm \ref{Algorithm3True} on Examples \ref{6tic3triple}, \ref{exBis} and \ref{exTris}.

Let $C\subset \mathbb{P}^2$  be  the sextic curve already considered in Examples \ref{6tic3triple}.
 
\begin{enumerate}\setcounter{enumi}{11} \setlength\itemsep{1em}
\item[Step 1] We already know that $k=3$ here.
\item[Step 2]Perform step \ref{Alg3Step2} to find the ideal $I_{L_3}$ of the 3-secant planes to $C_6$ (we do not write down the ideal here).
 	\item[Step \ref{Alg2Step2}]:  $B_1=\left( \begin{array}{cc} z_0& z_1 \\z_1& z_2\\ z_2& z_3\\z_3& z_4\\z_4& z_5\\z_5&    z_6
\end{array}     \right)$; so

   $I_{C_6}=(-z_1^2+z_0z_2,-z_1z_2+z_0z_3,-z_2^2+z_1z_3,-z_1z_3+z_0z_4,-z_2
     z_3+z_1z_4,-z_3^2+z_2z_4,-z_1z_4+z_0z_5,-z_2z_4+z_1z_5, -z_3z_4+z_2
     z_5,-z_4^2+z_3z_5,-z_1z_5+z_0z_6,-z_2z_5+z_1z_6,-z_3z_5+z_2z_6,-z_
     4z_5+z_3z_6,-z_5^2+z_4z_6)
$;
\item[Step \ref{Alg3Step4}:] The Hilbert polynomial of $I_{S_3}$ is $H_{I_s}(t)=9$ then we have that  
 $\length \left((\pi|_{C_n})^{-1}(\Sing_3(C))\right)=9$.
\item[Step \ref{Alg3StepF}:] $(F_3)=(-s^8t + 5s^7t^2 + 29/4s^6t^3 - 217/4s^5t^4 + 65/4s^4t^5 + 341/4s^3t^6 - 9/2s^2t^7 - 18st^8)$;
      \item[Step \ref{Alg3Step7}] We computed the zeros  of $F_3$ with Bertini \cite{Bertini} (maximum precision utilized $10^{-52}$, less decimal digits shown here):
    $$  \footnotesize{\begin{array}{l|l}
      s&t\\ \hline
      0&1\\
      1&-1\\
1& 3.333e^{-1} +i2.775e^{-17} \\
\hline 
1&0\\
1&0.25\\
1&-3.333e^{-1} -i8.326e^{-17}\\ \hline
1&0.5\\
1&2\\
1&-2\\ 
      \end{array}}$$

\item[Step \ref{Alg3Step8}:] {{\begin{enumerate}
\item\label{numa}$\mathbb(f)(0:1)= (0:-24:0)$;

\item\label{numb}$\mathbb(f)(1: -1)
=(0:90:0)$;

\item\label{numc}$\mathbb(f)(1: 3.333e^{-1}+i 2.775e^{-17})
=(1.172e^{-15}-i1.439e^{-16}:
-0.288-i8.258e^{-17}:\\
-2.220e^{-16}+i2.467e^{-17})$;

\item\label{numd}$\mathbb(f)(1:0)
= (4:0:0)$;

\item\label{nume}$\mathbb(f)(1: 0.25)
= (0.6:0:0)$;

\item\label{numf}$\mathbb(f)(1:-3.333e^{-1} -i8.326e^{-17})
=\\(8.641+i7.401e^{-16}:\\
-1.040e^{-16}+i1.259e^{-15}:
 i2.590e^{-16})$;

\item\label{numg}$\mathbb(f)(1: 0.5)
= (0:0:0.9375)$;

\item\label{numh}$\mathbb(f)(1: 2)
= (0:0: 1470)$;

\item\label{numi}$\mathbb(f)(1:-2)
=(0:0:-630)$;
\end{enumerate}
}}
\item[Step 10:] Notice that, with a precision of $10^{-52}$, 
\begin{itemize}
\item the cases \eqref{numa}, \eqref{numb}, \eqref{numc} lead to the same projective point: $(0:1:0)$;
\item the cases \eqref{numd}, \eqref{nume}, \eqref{numf} lead to the same projective point: $(1:0:0)$;
\item the cases \eqref{numg}, \eqref{numh}, \eqref{numi} lead to the same projective  point: $(0:0:1)$.
\end{itemize}

These are the coordinates of the 3 triple points.

\item[Step 11:]
Perform the steps above starting with $X_2$, so to get 

\smallskip

 $F_2=-s^{10}t + 5092/713s^9t^2 + 24953/2852s^8t^3 - 93271/713s^7t^4 +31322/713s^6t^5 + 1016323/1426s^5t^6 - 1099301/2852s^4t^7 - 
1495957/1426s^3t^8 + 2898/31s^2t^9 + 156672/713st^{10}$. 

\smallskip 

Compute $F_2/F_3=s^2-(1527/713)st-8704/713t^2$ (Macaulay 2 \cite{m2} can do that), and find its solutions (we used again Bertini \cite{Bertini} for convenience):

\smallskip

$(s:t)=(1: 2.116e^{-1}+i 1.387e^{-17})$,

$(s:t)=(1: -3.870e^{-1} -i5.551e^{-16})$, 

\smallskip

 then apply the parameterization:
 
 \smallskip 
 
{{\begin{enumerate}
\item\label{numk}$\mathbb(f)(1: 2.116e^{-1}+i 2.220e^{-16})
=\\(1.009 -i2.457e^{-15}:
0.121-i6.207e^{-16}:
0.0234-i1.402e^{-16})$.

\item\label{numj}$\mathbb(f)(1: -3.870e^{-1} -i5.551e^{-17})
=\\(9.048:
1.086: 
0.210) $.

 \end{enumerate}}}\end{enumerate}
So we found that cases \eqref{numj} and \eqref{numk} lead (approximatively) to the same projective point: $(1:0.12:0.023)$. Those are the coordinates of the double point of $C$ (here the approximation given by Bertini is less precise).
\end{ex}

 \begin{ex}\label{3.22} Consider the quartic curve $C\subset \mathbb{P}^2$  given by the following parameterization:
$$\left\{ \begin{array}{l}
x=s^4\\
y=-s^3t+st^3\\
z=t^4
\end{array}\right. .$$  
It is easy to check that in this case $X_3$ is empty and $X_2$ is reduced and constituted of 3 points, none of them on $\mathcal{C}_2$; hence $C$ has only 3 ordinary nodes (it is easy to get that  $X_2 = \left\{(1:-\sqrt{2}:1), (1:\sqrt{2}:1), (1:0:-1)\right\}$).

By using Algorithm \ref{Algorithm2True} up to Step  \ref{Alg2Step52}, one gets that the ideal of the singular points of $C$ is $I = (w_1^2+w_0w_2-w_2^2,w_0w_1+w_1w_2,w_0^2-w_2^2)$, hence $\Sing(C) = \left\{(1:0:1), (1:-\sqrt{2}:-1), (1:\sqrt{2}:-1)\right\}$.

If one uses Algorithm \ref{Algorithm3True}, one gets that the 6 points in $\PP^1$ that are mapped to the three nodes by the parameterization of $C$ are the solutions of $F= -s^6 + s^4t^2 - s^2t^4 + t^6 = 0$, i.e. the points 
$$ (1:1), (1:-1), (2:\sqrt{2}+i\sqrt{2}), (2:-\sqrt{2}+i\sqrt{2}), (2:\sqrt{2}-i\sqrt{2}), (2:-\sqrt{2}-i\sqrt{2}).$$  
Their images via the parameterization of $C$ yield the three nodes.
\end{ex}

\begin{ex}\label{3.23} Consider the following quartic curve $C\subset\mathbb{P}^2$:
$$\left\{ \begin{array}{l}
x=s^4+s^3t\\
y=s^2t^2\\
z=st^3+t^4
\end{array}
\right..
$$
The scheme $X_3$  is empty, therefore the singularities of $C$ are all of multiplicity 2. From the matrix $M_2$ one gets the ideal of $X_2$, and it is easy to see that it is made of three simple points: $X_2=\{(1:1:1),(1:0:0),(0:0:1)\}$. We have that $(1:0:0),(0:0:1)\in \mathcal{C}_2$, hence $C$ has 2 cusps and an ordinary double point.

From Algorithm \ref{Algorithm2True}, one gets that the ideal of $\Sing(C)$ is $(w_1^2 + w_1w_2, w_0w_1 - w_1w_2, w_0w_2 + w_1w_2)$, hence the three singular points are $(1:0:0),(0:0:1),(1:-1:1)$. 

From Algorithm \ref{Algorithm3True}, one gets that those three points come (via ${\mathbf f}$) from the points in $\PP^1$ which are the zeroes of $F=s^4t^2+s^3t^3+s^2t^4$, i.e. from the points $B_1=(1:0)$, $B_2=(0:1)$, both counted twice, and $B_3,B_4 = (2:-1\pm i\sqrt3)$. It is easy to check that $B_1$ and $B_2$ give, respectively, the cusps $(1:0:0)$ and $(0:0:1)$, while $B_3$ and $B_4$ give the node $(1:-1:1)$.
\end{ex}

\begin{ex} This is   \cite[Example 1]{refCWL}, where it is studied with different tools. Consider the following quartic curve $C\subset\mathbb{P}^2$:
$$\left\{ \begin{array}{l}
x=s^4-40s^3t+40st^3+t^4\\
y=s^4+480s^2t^2+t^4\\
z=s^4+40s^3t+480s^2t^2+40st^3+t^4
\end{array}
\right..
$$
The scheme $X_3$  is empty, therefore the singularities of $C$ are all of multiplicity 2. From the matrix $M_2$ one gets the ideal of $X_2$ and it is easy to see that it is made of three simple points, none of them on $\mathcal{C}_2$, hence $\Sing(C)$ is given by three ordinary nodes.
From Algorithm \ref{Algorithm3True}, one gets that those three nodes come (via ${\mathbf f}$) from the points in $\PP^1$ which are the zeroes of $F=-st(s^4 - 2868/37s^3t - 16656/37s^2t^2 + 2868/37st^3 + t^4)$. Solving $F=0$, with 52 digit approximation with Bertini \cite{Bertini} (but we write down less decimal digits), one gets the points \\
$B_1=(1:0)$, \\$B_2=(0:1)$, \\$B_3 =(-5.583+i2.664e^{-15}:1)$, \\$B_4 = (-0.012+i   3.729e^{-17}:1)$, \\$B_5 =(0.179  -i6.938e^{-18}:1)$,\\ $B_6 = (82.930 -i3.410e^{-13}:1)$\\ and it is very easy to see that those solutions coincide with those found in \cite{refCWL}. The images of these six points give the three nodes: $B_1$ and $B_2$ give $(1,-1,1)$, while, applying the parametrization to the others, one gets  
{{
$$\mathbf{f}(B_3)=(7714.25 - i1.171e^{-11}:15939.8 - i1.613e^{-11}:8752.08 - i6.062e^{-12})=$$
$$= (0.881: 1.821:1)\sim$$
$$\mathbf{f}(B_4)=(0.517 + i1.491e^{-15}:1.069 -i 4.317e^{-16}:0.587 +i 1.060e^{-15})=$$
$$=(0.881:1.821:1),$$

$$\mathbf{f}(B_5)=( 7.934 - i2.510e^{-16}:16.395 - i1.193e^{-15}:23.788 - i1.497e^{-15})=$$
$$=(0.333:0.689:1)\sim$$
$$\mathbf{f}(B_6)=(24488700 - i4.966e^{-7}:50600600 -i 8.052e^{-7}:73417900 - i0.000)=$$
$$=(0.333:0.689:1),$$}}
 modulo approximation, the first two correspond to the same node, as well as the last two.

As noticed in \cite{refCWL}, this shows that approximation (here in Steps \ref{Alg3Step7}--\ref{Alg3Step8}  of Algorithm \ref{Algorithm3True}) challenges one to
recognize when different values appear only due to the approximation and they have to be considered as the same point. This could give problems if there are singular points which are very close to each other.  Consider that we have also the other tools ($X_3$, the intersection with $\mathcal{C}_2$) that give information a priori on the singular points.
\end{ex}

Studying the structure of the schemes $X_2,X_3,\ldots,X_{n-1}$ one should in principle be able to detect everything about the structure of the singularities of $C$. We will analyze the case of double points.

\section{Study of double points.}
 
We have that potentially, the structure of $X_2$ describes all the singularities of $C$: not only their multiplicities, but also if there are multiple tangents, infinitely near other singular points, etc.. 

Our first aim is to describe the structure of the double points of $C$; notice that with ``~double point of $C$~" we always mean  ``~singular point of $C$ of multiplicity 2~".

The possibilities for a double point $P\in C$ are many; they are classified by how many other double points there are infinitely near to $P$. Nodes need only one blowup at $P$ to give a smooth strict transform above that point, which has two points over $P$; similarly the simplest cusp $P$ needs only one blowup at $P$ to give a  strict transform which has one smooth point over $P$.

\begin{notation}
 We will use the terminology  {\it  singularity} $A_{2m-1}$, or  {\it  singularity } $A_{2m}$, $m\geq 1$, to indicate a double point which is two-branched or, respectively, a cusp (unibranched) and which needs $m$ successive blow-ups before the singularity disappears. In classical terms, $A_1$ represent an ordinary node, $A_2$ a simple cusp and $A_3$ a tacnode.  

No other possibilities are given for points of multiplicity 2; and an $A_{2m-1}$ or an $A_{2m}$ contributes as $m$ distinct double points in Clebsch formula for $C$. 
\end{notation}

 When we consider the simplest types of double points the situation is easy, and we get:
 
 \begin{rem}\label{simplydouble}
Assume $C$ has only double points and no infinitely near singular points. Every singular point gives a simple point of $X_2$. In particular: 
\begin{enumerate} 
\item\label{i} an ordinary $A_1$ double point yields a point $R\in X_2\setminus \mathcal{C}_2$ which corresponds to the secant line of $C_n$ which, in the projection $C_n \rightarrow C$ from $\Pi$, contracts two points of $C_n$ to a node on $C$.
\item\label{ii} if a double point is a cusp $A_2$ on $C$, it yields a point of $X_2$ lying on the conic {$\mathcal{C}_2 : \{4x_0x_2-x_1^2=0\}$}, corresponding to the point on $C_n$ whose tangent contracts to the cuspidal point of $C$ in the projection.
\end{enumerate}
\end{rem}

Let us notice that $\length(X_2)={n-1 \choose 2}$, and each double point of $C$ corresponds to a different point of $X_2$, hence $X_2$ is reduced and the first statement of the remark follows.

\bigskip

It is quite a classical approach in studying singularities of rational plane curves via projection from a rational normal curve $C_n$, to consider osculating spaces to $C_n$ in order to describe the structure $A_s$ of the double points (e.g. see \cite{MOE}); the following result is common knowledge:

\begin{lem}\label{Projdoublepoints} 

 Let $P\in C\subset \PP^2$ be a double point on a rational curve of degree n, then:
 \begin{enumerate}
\item \label{nodo}The point $P$ is an $A_{2m-1}$, where $2m-1<n$, if and only if the scheme $\pi^{-1}(P)$ is given by two distinct points $Q_1,Q_2\in C_n$  and $m$ is the maximum value for which  $\pi(O^{m-1}_{Q_1}(C_n))=\pi(O^{m-1}_{Q_2}(C_n))$ and the latter sets do not fill the whole $\PP^2$ (here $O^{m-1}_{Q_1}(C_n)$ is the $(m-1)$-dimensional osculating space to $C_n$ at $Q_1$).
 
\item\label{cuspfacile} The point  $P$ is an $A_{2m}$, where $m < n-1$, if and only if the scheme $\pi^{-1}(P)$ is given by the divisor  $2Q\in C_n$  and $m$ is the maximum value for which  $\pi(O^{m+1}_{Q}(C_n))$ and the latter sets do not fill the whole $\PP^2$. 
\end{enumerate}
\noindent When $m\geq 2$, we will have that the image $\pi(O^{m-1}_{Q_1}(C_n))=\pi(O^{m-1}_{Q_2}(C_n))$, (respectively $\pi(O^{m+1}_{Q}(C_n))$) is $T_P(C)$, the unique tangent line to $C$ at $P$.

 \end{lem}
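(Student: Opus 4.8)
The plan is to reduce the whole statement to the local contact theory of the branch(es) at $P$, carried through the single linear projection $\pi$. The starting point is the dichotomy already underlying Proposition \ref{Singpoint}: a double point $P$ of $C=\pi(C_n)$ corresponds to a line $H\cong\PP^1$ that is $2$-secant to $C_n$ and meets $\Pi$ in exactly one point, so that $\pi(H)=P$. Either $H\cap C_n=Q_1+Q_2$ with $Q_1\neq Q_2$ (a genuine secant, $\pi^{-1}(P)=\{Q_1,Q_2\}$), or $H=T_Q(C_n)$ is tangent and $H\cap C_n=2Q$. First I would note that, because $P$ has multiplicity exactly $2$, in the first case each branch is smooth (a contracted tangent $T_{Q_i}(C_n)$ would force a local multiplicity $\geq 3$ at $P$), so the tangents are not contracted; by the classification of multiplicity-$2$ points recalled before the lemma, the secant case yields precisely the two-branched $A_{2m-1}$ and the tangent case the unibranch $A_{2m}$.

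The engine of the translation is that $\pi$ is linear: for the local arc $\gamma_i(u)=\nu_n(t_i+u)$ one has $\pi\big(O^{j}_{Q_i}(C_n)\big)=\langle \beta_i(0),\beta_i'(0),\dots,\beta_i^{(j)}(0)\rangle$, where $\beta_i=\pi\circ\gamma_i$ is the $i$-th branch of $C$; that is, the projected osculating spaces are exactly the osculating flag of the branch inside $\PP^2$. I would then read this flag off the order of flatness of $\beta_i$ against its tangent: $\pi(O^0_{Q_i})=P$, while for $j\geq 1$ the space $\pi(O^{j}_{Q_i})$ equals the branch's tangent line as long as $\beta_i$ stays flat against it to order $j$, and fills $\PP^2$ the moment it leaves. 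The $A_{2m-1}$ characterization then says: the two branches share a tangent and are mutually flat against it to order $m-1$, so $\pi(O^{m-1}_{Q_1})=\pi(O^{m-1}_{Q_2})$ is a proper subspace, but they diverge at order $m$, so $\pi(O^{m}_{Q_i})$ already fills $\PP^2$; this is contact order $m$, the defining condition of $A_{2m-1}$. The cuspidal case is identical with a single $Q$: $\pi(O^j_Q)$ is $P$ for $j\leq 1$, equals the cuspidal tangent on a range $2\leq j\leq 2m$, and fills $\PP^2$ exactly where $\beta$ finally leaves its tangent, which pins down $m=\delta_P$ (the sharp index being where ``the latter sets do not fill the whole $\PP^2$'' stops holding).

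The main obstacle is to make ``contact order $m$ is equivalent to coincidence of the proper projected osculating spaces'' rigorous, since a priori two branches can have high mutual contact without either being flat against the common tangent, and then the projected flags would saturate to $\PP^2$ too early and give the wrong $m$. Here the rigidity of the rational normal curve is decisive: because the osculating flag of $C_n$ is complete, the single map $\pi$ is already pinned down by the full jet of one branch, so the second branch is not free. I would exploit the classical maximality $\dim\langle O^{m-1}_{Q_1},O^{m-1}_{Q_2}\rangle=2m-1$ for distinct points (valid since $2m-1<n$), and show that $\pi(O^{m-1}_{Q_1})=\pi(O^{m-1}_{Q_2})$ being a line forces $\dim\big(\langle O^{m-1}_{Q_1},O^{m-1}_{Q_2}\rangle\cap\Pi\big)=2m-3$, a codimension-$2$ incidence which one checks is exactly the identification of the two length-$m$ jets, hence contact order $\geq m$. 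Carrying out this dimension bookkeeping — and its cusp analogue, where the complete flag at the single $Q$ plays the role of both osculating spaces — is the technical heart; it is what rules out the naive ``counterexamples'' in which a branch is non-inflectionary yet the contact is deep (such configurations collapse $C$ to lower degree).

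Finally, for the concluding addendum I would argue that once $m\geq 2$ the coincident space $\pi(O^{m-1}_{Q_1})=\pi(O^{m-1}_{Q_2})$ (respectively $\pi(O^{m+1}_{Q})$, which for $m\geq 1$ still lies in the proper range and is therefore a line) is $1$-dimensional and contains the tangent direction of the branch(es) through $P$. A line through $P$ carrying the tangent direction of $C$ is the tangent line, so this space is $T_P(C)$; it is the unique tangent because for $m\geq 2$ the two branches are mutually tangent (respectively the cusp has a single tangent). This establishes the last assertion and completes the statement.
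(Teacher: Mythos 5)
Your opening reduction is the right one, but first a point of context: the paper offers no proof to compare against --- Lemma \ref{Projdoublepoints} is invoked as ``common knowledge'' with a pointer to \cite{MOE} --- so your argument must stand entirely on its own. The part that does stand is the ``engine'': since $\pi$ comes from a linear map, $\pi(O^{j}_{Q_i}(C_n))$ is the span of the order-$j$ jet of the branch $\beta_i=\pi\circ\nu_n$ at $P$, i.e.\ the osculating flag of that branch in $\PP^2$. The fatal step is the one you yourself single out as the technical heart: the claim that the analytic type ($A_{2m-1}$ = contact order $m$ between the branches; $A_{2m}$ = semigroup $\langle 2,2m+1\rangle$ of the cusp) forces the branches to be correspondingly flat against their tangent line, the purported mechanism being that rigidity of $C_n$ makes all other configurations ``collapse $C$ to lower degree.'' No dimension bookkeeping can establish this, because it is false: the projected osculating flag records only the contact of each branch with its own tangent line, a projective invariant, while the type $A_s$ is an analytic invariant, and the two genuinely diverge for curves satisfying every hypothesis of the lemma.

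Indeed, the paper's own Example \ref{cuspidal5ic} refutes the statement you are trying to prove. For the quintic $\mathbf{f}=(s^2t^3:s^5:s^5+s^4t+t^5)$, the point $P=(0:0:1)$ is an $A_4$ cusp (the paper deduces this via Corollary \ref{doublepts}; alternatively Clebsch gives $6=4\cdot 1+\delta_P$, so $\delta_P=2$, and a unibranched double point with $\delta_P=2$ is an $A_4$). Its preimage is the divisor $2Q$ with $Q=\nu_5((0:1))=(0:\dots:0:1)$, so $O^j_Q(C_5)=\langle e_5,e_4,\dots,e_{5-j}\rangle$, where $e_i$ is the $i$-th coordinate point of $\PP^5$. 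Here $\pi=(z_3:z_0:z_0+z_1+z_5)$, whence $e_2,e_4\in\Pi$, $\pi(e_5)=\pi(e_1)=(0:0:1)$, $\pi(e_3)=(1:0:0)$. Therefore $\pi(O^4_Q)$ is the line through $(0:0:1)$ and $(1:0:0)$, which does not fill $\PP^2$, while $\pi(O^5_Q)=\PP^2$; the lemma's criterion returns $m=3$ (with $3<n-1=4$, so the lemma claims to apply) and classifies $P$ as an $A_6$. The mismatch occurs exactly where your proof waves: the branch is $x=v^2+\dots$, $y=v^5+\dots$, whose contact with its tangent is $5$, but whose type is governed by the first odd exponent surviving after subtracting the even part of $y$ --- an analytic datum invisible to the flags. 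The two-branch case fails identically: branches $y=x^2+x^4+\dots$ and $y=x^2+x^3+\dots$ form an $A_5$ although both projected osculating planes fill $\PP^2$, and this is realized, with nothing ``collapsing to lower degree,'' by the degree-$7$ curve $(s^7+t^7:s^6t+st^6:s^5t^2+s^3t^4+s^2t^5)$ at $(1:0:0)$, whose preimages are $(1:0)$ and $(0:1)$. This gap is precisely what Theorem \ref{2mtom} was designed to close: the scheme-theoretic images $\pi(2mQ|_{C_n})$ and $\pi(mQ_1|_{C_n}\cup mQ_2|_{C_n})$, unlike projected osculating spaces, do detect analytic contact. So your argument cannot be repaired as written; a correct proof must either use that finer scheme data or impose extra hypotheses on the realization, since with the stated reading of the maximality condition the lemma itself is false.
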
 

\medskip

Notice that in case \eqref{nodo} we must have that $H = \langle O^{m-1}_{Q_1}(C_n),O^{m-1}_{Q_2}(C_n)\rangle \cong \PP^{2m-1}$ is such that $\pi(H)= T_P(C)$, hence necessarily $2m-1<n$, while we need $m+1<n$ in case \eqref{cuspfacile}, otherwise $O^{m+1}_{Q}(C_n)=\PP^n$, so $\pi(O^{m+1}_{Q}(C_n)) = \PP^2$.   But this means that the description offered by Lemma  \ref{Projdoublepoints} cannot cover many cases, e.g. when $n=4$, a quartic curve $C\subset \PP^2$ can have an $A_5$ or $A_6$ singularity, but in this case the lemma does not apply. We have not been able to find in the literature a description of these situations done by projection from $C_n$; our Theorem \ref{2mtom} below will do exactly this.

 \medskip
 
If $P$ is a double point on a plane rational curve $C$, its structure $A_s$ is determined only by the number $m$ of the blowups needed to resolve the singularity and by the structure of the last blowup which yields two points ($A_s = A_{2m-1}$) or one ($A_s=A_{2m}$) points over $P$.

\bigskip

The following theorem gives an alternative way to determine $A_s$, by looking at the projection $\pi: C_n\rightarrow C$ and at the image of an appropriate length $2m$ subscheme of $C_n$.  

Recall that a 0-dimensional scheme $X\subset \PP^n$ is called {\it curvilinear}, if it is contained in a reduced curve $\Gamma$, smooth at $P =  \Supp(X)$.

In the sequel, if we have a simple point $R$ on a curve $D \subset \PP^n$,  we will denote by $sR|_D$ the curvilinear scheme in $D$ supported at $R$ and of length $s$; this is the scheme intersection of the curve with the $(s-1)$-infinitesimal neighborhood of $R$ in $\PP^n$, i.e. the scheme associated to the ideal $I_D +(I_R)^s$. 
\bigskip

\begin{thm}\label{2mtom} Let $P\in C \subset \mathbb{P}^2$, where $C$ is a rational curve and $P$ is a double point for $C$. Then:
 \begin{enumerate}
\item\label{a} $P$ is an $A_{2m-1}$ singularity if and only if 
\begin{enumerate}
 \item\label{ai}  $\pi^{-1}(P) = Q_1\cup Q_2 \subset C_n$,
 \item\label{aii}  $\pi (mQ_1|_{C_n}\cup mQ_2|_{C_n})= X $, where $X$ is a curvilinear scheme of length $m$ contained in $C$, 
 \item\label{aiii}  $m$ is the maximum integer for which \eqref{aii} holds.
\end{enumerate}

\item\label{b}  $P$ is an $A_{2m}$ singularity if and only if 
\begin{enumerate}
\item\label{bi} $\pi^{-1}(P) = Q \in  C_n$, 
\item\label{bii} $\pi (2mQ|_{C_n}) = X $, where $X$ is a curvilinear scheme of length $m$ contained in $C$,
\item\label{biii}  $m$ is the maximum integer for which \eqref{bii} holds. 
\end{enumerate}
\end{enumerate}
\end{thm}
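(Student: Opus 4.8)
The plan is to translate the classical resolution-of-singularities picture for double points into the language of the projection $\pi\colon C_n \dashrightarrow C$, and to reduce both statements of the theorem to a computation of the length of the image scheme. The key conceptual point is that the number $m$ of blow-ups needed to resolve a double point $P$ is governed by the order of contact along the branch(es), and that this order of contact is precisely what the osculating data of $C_n$ records. So I would first recall, for a double point $P$, the intersection-multiplicity interpretation of $m$: for an $A_{2m-1}$, the two branches meet with intersection multiplicity $m$ at $P$, and for an $A_{2m}$, the single cuspidal branch has a specific local parameterization whose order controls $m$. The theorem asserts that this same $m$ equals the length of the curvilinear image scheme $\pi(mQ_1|_{C_n}\cup mQ_2|_{C_n})$ (respectively $\pi(2mQ|_{C_n})$), so the bulk of the proof is to compare two length computations.

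First I would set up local coordinates. For case \eqref{a}, I would pick an affine parameter $u$ on $C_n \cong \PP^1$ near $Q_1$ and another near $Q_2$; since $\pi$ is a linear projection composed with the embedding $\nu_n$, the two branches of $C$ at $P$ are the images of these two local parameterizations, each a smooth analytic arc through $P$ because $\pi$ restricted to $C_n$ is an immersion away from the ramification governed by $\mathcal{C}_2$ (here, since $\pi^{-1}(P)$ is two distinct points, each branch is smooth). The curvilinear scheme $mQ_i|_{C_n}$ is the $(m-1)$-st infinitesimal neighborhood cut on $C_n$, and its image under the linear map $\pi$ is a length-$m$ curvilinear scheme on the corresponding branch of $C$. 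The content of condition \eqref{aii} is that these two length-$m$ images coincide as subschemes of $C$, i.e. the two branches agree to order $m$ at $P$; this is exactly the statement that their intersection multiplicity is at least $m$. I would then invoke the standard fact (which I may cite as common knowledge, in the spirit of Lemma \ref{Projdoublepoints}) that a two-branched double point whose branches meet with intersection multiplicity exactly $m$ is an $A_{2m-1}$, and that this intersection multiplicity equals the maximal $m$ for which the two length-$m$ jets coincide. Maximality of $m$ in \eqref{aiii} then matches maximality in the definition of $A_{2m-1}$.

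For case \eqref{b}, the single point $Q$ maps to a cusp, so the local picture is a branch $C$ with a parameterization $t \mapsto (t^2+\cdots, \lambda t^{2m+1}+\cdots)$ in suitable coordinates; the relevant subscheme of $C_n$ is now $2mQ|_{C_n}$, of length $2m$, whose image is claimed to be curvilinear of length $m$ in $C$. Here the factor of two appears because $\pi|_{C_n}$ has a simple ramification at $Q$ (this is the case $R\in \mathcal{C}_2$ from Remark \ref{simplydouble}\eqref{ii}): the map on tangent spaces drops rank, so a length-$2m$ jet upstairs collapses to a length-$m$ jet downstairs. I would make this precise by computing, in the local analytic coordinate $t$ on $C_n$ at $Q$, how the pullback of the local ring of $C$ at $P$ sits inside $K[[t]]$; the conductor/valuation computation shows that the image of the ideal defining $2mQ|_{C_n}$ is the ideal defining the length-$m$ curvilinear scheme exactly when the cusp needs $m$ blow-ups, i.e. is an $A_{2m}$.

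The main obstacle I expect is the second condition in each case — proving that the image scheme $\pi(mQ_1|_{C_n}\cup mQ_2|_{C_n})$ (resp. $\pi(2mQ|_{C_n})$) is genuinely \emph{curvilinear} of length exactly $m$, and not merely of length $\le m$ or supported with some unexpected embedded structure. The equality ``$=X$ curvilinear of length $m$'' packages both an inclusion of schemes and a length count, and the delicate direction is showing the image is contained in a smooth reduced arc (curvilinearity) rather than spanning a fat point in $\PP^2$. I would handle this by working with the explicit local parameterizations and the fact that $C$ is a plane curve, so that any length-$m$ subscheme supported at a single smooth-arc point and lying on $C$ is automatically curvilinear once one checks it is contained in a single analytic branch; the branch-coincidence to order $m$ (case \eqref{a}) or the ramification collapse (case \eqref{b}) then forces the length to be exactly $m$. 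Matching this with the blow-up count $m$ in the definition of $A_s$ is the final step, and I would phrase it as: the number of infinitely near points of $C$ on the successive strict transforms equals the order of coincidence/ramification computed above, which completes both equivalences.
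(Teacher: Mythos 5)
Your proposal is correct and follows essentially the same route as the paper's proof: both reduce to the local analytic normal form of an $A_s$ double point (the paper's $\Gamma_s\colon y^2z^{s-1}-x^{s+1}=0$, i.e.\ two smooth branches meeting to order $m$ for $s=2m-1$ and the parameterization $t\mapsto(t^2,t^{2m+1})$ for $s=2m$) and verify the curvilinearity conditions and their maximality by computing the image of the length-$m$ (resp.\ length-$2m$) jet, with the converse following because the first condition in each case fixes the number of branches and maximality pins down $m$. The only cosmetic difference is that the paper makes the reduction explicit via the analytic isomorphism of $(C,P)$ with $(\Gamma_s,p)$ and carries out the check ideal-theoretically (obtaining $(y,x^m)$ at level $m$ versus the non-curvilinear $(x^{m+1},xy,y^2)$ at level $m+1$), whereas you phrase the same computation in terms of order of contact of the branches and the rank drop of $\pi|_{C_n}$ at $Q$.
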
 

\begin{proof} 

It is known that a {\it normal form} for an $A_s$ singularity is given by the curve $\Gamma_s \subset \PP^2$ of equation: $y^2z^{s-1}-x^{s+1}$ at the point $p = (0:0:1)$ (e.g. see \cite{refHa, refKP}), i.e. if $C$ has an $A_s$ singularity at $P$, with $s=2m$, respectively $s= 2m-1$, then it needs $m$ successive blowups in order to be resolved and there is one, respectively two, points over $P$, and the same holds for $\Gamma_s$ at $p$. Moreover $C$  is also  analytically isomorphic to $\Gamma_s$ at $p$, and the same holds for their successive blowups. Hence we will work on these curves first. 

Case \eqref{a}: Let $(C,P)$ be an $A_{2m-1}$ singularity (hence \eqref{ai} holds). We have that $\Gamma_{2m-1}$, in affine coordinates, is the union of the two rational curves $\Gamma$: $\{y-x^m=0\}$ and $\Gamma '$: $\{y+x^m=0\}$, while $p=(0,0)$. The normalization of $\Gamma \cup \Gamma '$ is  $\psi :\tilde{\Gamma} \cup \tilde{\Gamma '}\rightarrow \Gamma \cup \Gamma '$, where $\Gamma \cup \Gamma '$ is the union of two disjoint lines and the inverse image of $p$ is given by two points, $q\in \tilde{\Gamma}$ and  $q'\in \tilde{\Gamma '}$. Since $\psi |_{\tilde{\Gamma}}: \tilde{\Gamma} \to \Gamma $ and $\psi |_{\tilde{\Gamma'}}: \tilde{\Gamma}' \to \Gamma' $ are two isomorphisms, it is clear that  $\psi(mq|_{\tilde{\Gamma}}) = mp|_{\Gamma}$ and $\psi(mq'|_{\tilde{\Gamma '}}) = mp|_{\Gamma '}$.

Now let us notice that $\Gamma \cap \Gamma'$ is the curvilinear scheme $Z$ whose ideal is $(y,x^m)$, hence $mp|_{\Gamma} = mp|_{\Gamma'} = Z$, and $\eqref{aii}$ is true.  On the other hand, if we consider $\psi((m+1)q|_{\tilde{\Gamma}} \cup (m+1)q'|_{\tilde{\Gamma '}})$, we get the scheme $(m+1)p|_{\Gamma} \cup (m+1)p|_{\Gamma '}$ which corresponds to the ideal $(x^{m+1},xy,y^2)$, so such scheme is not curvilinear and \eqref{aiii} holds.

Case \eqref{b}: Let $(C,P)$ be an $A_{2m}$ singularity (hence \eqref{bi} holds). We have that $\Gamma_{2m}$, in affine coordinates, is the irreducible curve $y^2-x^{2m+1}$, hence a parameterization (which is also a normalization) for it is $\phi : \mathbb{A}^1 \to \Gamma_{2m}$, where $\phi(t) = (t^2,t^{2m+1})$ and $p=(0,0)$ is such that $\phi^{-1}(p)=q$.  The ring map corresponding to $\phi$ is:  
$$
\tilde{\phi} : \frac{K[x,y]}{(y^2-x^{2m+1})} \to K[t], \quad \overline{x} \mapsto t^2, \quad \overline{y} \mapsto t^{2m+1}.
$$ 

The scheme $2mq|_{\mathbb{A}^1}$ corresponds to the ideal $(t^{2m})$, and $\tilde \phi^{-1} ((t^{2m}))= (y,x^m) $, hence \eqref{bii} is true. 

On the other hand, the scheme $2(m+1)q|_{\mathbb{A}^1}$ corresponds to the ideal $(t^{2m+2})$, and $\phi^{-1} ((t^{2m+2}))= (y^2,xy,x^{m+1}) $, hence \eqref{bii} is true. 

To check that the ``~if~'' part of statements \eqref{a} and \eqref{b} holds, just consider that the \eqref{ai} and the \eqref{bi} parts determine if the singularity is of type $A_{2h}$ or $A_{2h-1}$, while the structure of $\Gamma _s$ forces $h=m$. 

Now let us notice that, being $\Gamma_s$  at $p$ analytically isomorphic to $C$ at $P$, when we consider $\Gamma_s$ and $C$ as analytic complex spaces, there exist open Euclidean neighborhoods $U$ of $p$ and $V$ of $P$ such that $U\cap \Gamma_s$ and $V\cap C$ are biholomorphically equivalent (e.g. see  \cite[Corollary 2.10, Ch.6]{BS}). Since the statement is of local nature, this is enough to conclude.
 \end{proof}

 \medskip
The following proposition relates the structure of $X_2$ at a point $R\in X_2$ to the value of the invariant $\delta_P$, for the double point $P\in \Sing(C)$ associated to the secant (or tangent) line parameterized by $R$.

\begin{prop}\label{SevVar} Let $C,{\bf f}, X_2$ be as in Section \ref{Preliminaries}. Let $C$ have only double points as singularities and let $R\in X_2$ and $P\in \Sing(C)$ be the point associated to $R$. Then $\length_R(X_2) = \delta_P$. 
\end{prop}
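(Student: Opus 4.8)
The plan is to reduce the computation of $\length_R(X_2)$ to a purely local one and to identify $X_2$, near $R$, with the classical \emph{double point scheme} of the parameterization. First I would make precise the description already implicit in the proof of Proposition \ref{Singpoint}: a point $R=(x_0:x_1:x_2)\in\PP^2=\PP(K[s,t]_2)$ lies in $X_2$ exactly when the line $\ell_R=\ker A_2(R)\subset\PP^n$, spanned by the image under $\nu_n$ of the degree-$2$ divisor cut out on $\PP^1$ by $g_R=x_0s^2+x_1st+x_2t^2$, meets the center $\Pi$. Restricting the forms $f_0,f_1,f_2$ to $\ell_R$ and using $f_u(\nu_n(p))=f_u(p)$, this condition becomes the vanishing of the $2\times2$ minors of the $3\times2$ matrix $\big(f_u(p_a)\big)_{u=0,1,2;\,a=1,2}$, where $p_1,p_2$ are the roots of $g_R$; that is, $R\in X_2$ iff $\mathbf{f}(p_1)=\mathbf{f}(p_2)$. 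Since each entry $f_u(p_1)f_{u'}(p_2)-f_{u'}(p_1)f_u(p_2)$ is alternating in $p_1,p_2$, it is divisible by the Vandermonde $s_1t_2-s_2t_1$, and the symmetric quotients descend to $\mathrm{Sym}^2\PP^1=\PP^2$, where they cut out $X_2$. The first step is to verify that this double point scheme agrees, as a scheme, with the determinantal $X_2$ defined by the minors of $M_2$.

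Next I would use that $\length_R(X_2)$, being the length of an Artinian local ring, is an analytic invariant, and that the scheme just described depends only on the germ of $\mathbf{f}$ at $\pi^{-1}(P)$. Hence, invoking the proof of Theorem \ref{2mtom} (the germ $(C,P)$ is analytically isomorphic to the normal form $\Gamma_s$, cf. \cite{BS}), I may replace $\mathbf{f}$ near $\pi^{-1}(P)$ by the normal-form parameterization before computing.

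The computation would then be short. If $P$ is an $A_{2m-1}$ singularity, then $\pi^{-1}(P)=Q_1\cup Q_2$ with local branches $\mathbf{f}(u)=(u,u^m)$ and $\mathbf{f}(v)=(v,-v^m)$; since $Q_1\neq Q_2$ the pair stays split, so $(u,v)$ are local coordinates on $\PP^2$ at $R$, where $X_2$ has ideal $(u-v,\,u^m+v^m)=(u-v,\,u^m)$, of colength $m$. If $P$ is an $A_{2m}$ singularity, then $\pi^{-1}(P)=Q$ with $\mathbf{f}(\tau)=(\tau^2,\tau^{2m+1})$; writing $e_1=\tau_1+\tau_2,\ e_2=\tau_1\tau_2$ for the local coordinates at $R=2Q$, the two descended equations are $e_1$ and $p_{2m}=\sum_{j=0}^{2m}\tau_1^{\,2m-j}\tau_2^{\,j}$, and reducing $p_{2m}$ modulo $e_1$ (so that $\tau_2=-\tau_1$ and $e_2=-\tau_1^2$) gives $p_{2m}\equiv(-e_2)^m$; thus the local ideal is $(e_1,e_2^m)$, again of colength $m$. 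Since an $A_{2m-1}$ or $A_{2m}$ double point contributes $m$ to the Clebsch count, $\delta_P=m$, and in either case $\length_R(X_2)=m=\delta_P$, as claimed.

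The main obstacle will be the scheme-theoretic identification in the first paragraph: one must check that the determinantal structure coming from the minors of $M_2$ agrees \emph{exactly} with the Vandermonde-reduced double point scheme, in particular that precisely one power of the Vandermonde is removed, so that the non-reduced structure at cusps (points of $\mathcal{C}_2$) comes out correctly. A useful global consistency check, which also furnishes an alternative endgame, is that summing the local lengths gives $\sum_P\delta_P={n-1\choose 2}=\length(X_2)$ by the Clebsch formula and Proposition \ref{Singpoint}; together with the bijection $\Supp(X_2)\leftrightarrow\Sing(C)$ it would then suffice to prove either inequality $\length_R(X_2)\le\delta_P$ or $\length_R(X_2)\ge\delta_P$ uniformly and invoke this identity to force termwise equality.
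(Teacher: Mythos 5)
Your route is genuinely different from the paper's. The paper proves this proposition by degeneration: it invokes \cite[Theorem 2.59,(1),(c)]{gls} to realize $C$ as the flat limit of a family of nodal rational curves $C_t$ in which each singular point $P$ splits into $\delta_P$ nodes, observes that each node of $C_t$ contributes a simple point to $X_{2,t}$, and concludes by flatness of the family $\{X_{2,t}\}$ (constant total length ${n-1\choose 2}$) that the limit scheme has length exactly $\delta_P$ at $R$. You instead identify $X_2$ locally with the classical double point scheme of $\mathbf{f}$ and compute its colength directly on the analytic normal forms $y^2=x^{2m}$ and $y^2=x^{2m+1}$. Your local computations are correct: for $A_{2m-1}$ the two split branches give the ideal $(u-v,\,u^m)$ of colength $m$, and for $A_{2m}$ the divided differences of $(\tau^2,\tau^{2m+1})$ reduce modulo $e_1$ to $(e_1,e_2^m)$, again of colength $m$; since $\delta_P=m$ in both cases this gives the claim. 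What your approach buys is an explicit, self-contained local argument that does not rely on the existence of a nodalization (a nontrivial deformation-theoretic input); what the paper's approach buys is that it never has to compare the determinantal structure of $M_2$ with the Vandermonde-reduced double point scheme.

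That comparison is the one step you must actually supply, and as stated it is not quite enough to ``verify that the two schemes agree'': note that $I_{X_2}$ is generated not only by the three maximal minors of $M_2$ obtained by deleting one of the rows $F_0,F_1,F_2$ (which are, via a Bezoutian identity, the three Vandermonde-reduced forms of degree $n-1$), but also by $n-1$ further minors of degree $n-2$, so a priori $X_2$ could be strictly smaller than the scheme $D(\mathbf{f})$ cut out by your three forms. The clean way to close this, along the lines of your own ``alternative endgame,'' is one-sided: the Bezoutian identity gives $I_{D(\mathbf{f})}\subseteq I_{X_2}$, hence $X_2\subseteq D(\mathbf{f})$ and $\length_R(X_2)\le\length_R D(\mathbf{f})=\delta_P$ at every $R$; summing and comparing with $\length(X_2)={n-1\choose 2}=\sum_P\delta_P$ (Proposition \ref{Singpoint} and the Clebsch formula) forces termwise equality. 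You should also record why the local length of $D(\mathbf{f})$ is an invariant of the analytic equivalence class of the parameterization germ (reparameterization of the source acts through $\mathrm{Sym}^2$, and the divided-difference ideal is unchanged under analytic coordinate changes in the target), since that is what licenses the passage to normal forms. With those two points made explicit, your proof is complete.
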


\begin{proof} It is well known that every rational plane curve $C_0$ can be viewed as the limit of a family of plane rational nodal curves $C_t$; moreover every singular point $P\in C_0$ is the limit for $t\rightarrow 0$ of $\delta_P$  nodes of $C_t$ (see \cite[Theorem 2.59,(1),(c)]{gls}). Hence by considering the associated family of the schemes $X_{2,t}$ we have that its generic element is given by simple points and at each point $R$ of $X_{2,0}$ we have that the structure of $X_{2,0}$ at $R$ is the limit of $\delta_P$ simple points where $P\in \Sing(C_0)$ is the point associated to the secant (or tangent) line parameterized by $R$. Therefore $\length_R(X_2) = \delta_P$. 
\end{proof}

This gives that the complete configuration of the double points of $C$ is visible in the structure of $X_2$:

 \begin{cor}\label{doublepts}
Let $C$ have only double points as singularities. Let $P\in \Sing(C)$. Then, from Proposition \ref{SevVar}, we have: 
\begin{enumerate}
\item $P$ is an $A_{2m-1}$ singularity  iff $P=\pi(L_R)$ where $R\in X_2\setminus \mathcal{C}_2$ corresponds to a secant line $L_R$ of $C_n$ and  $\length_R(X_2)=m$;

\item  $P$ is an $A_{2m}$ singularity  iff  $P=\pi(L_R)$ where $R\in X_2\cap \mathcal{C}_2$ corresponds to a tangent line $L_R$ of $C_n$ and $\length_R(X_2)=m$.
\end{enumerate}
\end{cor}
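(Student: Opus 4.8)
The plan is to deduce the statement directly from Proposition \ref{SevVar}, combining it with two ingredients that are already in place: the dichotomy ``secant versus tangent'' encoded by membership in $\mathcal{C}_2$, and the numerical identity $\delta_P = m$ valid for both $A_{2m-1}$ and $A_{2m}$ singularities. First I would record that, since $C$ has only double points, each $P\in\Sing(C)$ corresponds (by Remark \ref{simplydouble} and the reducedness of $X_2$ noted just after it) to a single contracted line $L_R$, parameterized by one point $R\in X_2$. Here $\pi^{-1}(P)$ is a degree-$2$ divisor on $C_n$, and $R=[G_R]$ where $G_R$ is the binary quadratic form whose roots cut out this divisor on $C_n\cong\PP^1$.

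Next I would pin down the branch type from the position of $R$ relative to $\mathcal{C}_2$. Since $\mathcal{C}_2=\{4x_0x_2-x_1^2=0\}$ is exactly the discriminant (double-root) locus in $\PP(K[s,t]_2)$, one has $R\in\mathcal{C}_2$ if and only if $G_R$ is a perfect square, i.e. if and only if $\pi^{-1}(P)=2Q$ for a single $Q\in C_n$ and $L_R$ is the tangent line $T_Q(C_n)$; otherwise $\pi^{-1}(P)=Q_1+Q_2$ with $Q_1\neq Q_2$ and $L_R$ is a genuine secant line. By Definition \ref{branchDef} these two cases are precisely the unibranched (cuspidal) and the two-branched double points, which by the classification of double-point types recalled above (and by Theorem \ref{2mtom}) are exactly the $A_{2m}$ and the $A_{2m-1}$ singularities respectively. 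Thus membership in $\mathcal{C}_2$ separates case (2) from case (1) and identifies $L_R$ as a tangent or a secant line, as claimed.

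It then remains only to recover the integer $m$. Here I would invoke Proposition \ref{SevVar}, which gives $\length_R(X_2)=\delta_P$, and combine it with the observation (recorded in the notation above) that an $A_{2m-1}$ or an $A_{2m}$ singularity contributes exactly $m$ to the Clebsch count, i.e. $\delta_P=m$ in both parities. Hence $\length_R(X_2)=m$, and since the branch type (read off from $\mathcal{C}_2$) together with the value $m$ determine the type $A_s$ uniquely among double points, the two biconditionals follow at once.

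The argument is essentially a matching of invariants, so the only point requiring genuine care is the identity $\delta_P=m$. Although it is quoted in the text, in a fully written proof I would confirm it on the normal forms $\Gamma_s:\{y^2=x^{s+1}\}$ used in the proof of Theorem \ref{2mtom}, computing $\delta_P=\length\bigl(\tilde{\mathcal{O}}_{C,P}/\mathcal{O}_{C,P}\bigr)=\lfloor (s+1)/2\rfloor$, which equals $m$ whether $s=2m-1$ or $s=2m$ (for the cusp $y^2=x^{2m+1}$ the value semigroup $\langle 2,2m+1\rangle$ has exactly $m$ gaps, and for $y^2=x^{2m}$ the two smooth branches meet with intersection multiplicity $m$). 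The analytic isomorphism of $(C,P)$ with $(\Gamma_s,p)$ then transports this to $C$, and everything else---the reducedness of $X_2$ and the identification of $\mathcal{C}_2$ with the tangent lines inside $\sigma_2(C_n)$---has already been established.
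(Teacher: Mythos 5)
Your proposal is correct and follows essentially the same route the paper intends: the corollary is deduced directly from Proposition \ref{SevVar} via the identity $\delta_P=m$ for an $A_{2m-1}$ or $A_{2m}$ point (recorded in the Notation at the start of Section 4) together with the secant/tangent dichotomy given by membership of $R$ in $\mathcal{C}_2$. Your extra verification of $\delta_P=m$ on the normal forms $y^2=x^{s+1}$ is a welcome addition, since the paper only quotes that fact.
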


The following examples show how one can analyze the presence of $A_{2m-1}$ and $A_{2m}$ singularities from the structure of $X_2$, by using Corollary \ref{doublepts}.

\begin{ex}
Consider the following quartic $C\subset\mathbb{P}^2$:
$$\left\{ \begin{array}{l}
x=s^3t-3st^3\\
y=s^2t^2\\
z=2s^4-8s^2t^2+9t^4
\end{array}
\right..
$$

Here $X_3$ is empty and $\length(X_2)=3$ while $\length (\Supp(X_2))=2$, and  none of the points of $X_2$ lies on $\mathcal{C}_2$. Hence, by  Corollary \ref{doublepts}, $C$ has one ordinary double point and  an $A_3$ singularity (a tacnode).
\end{ex}

\begin{ex}
Consider the following septic curve $C\subset\mathbb{P}^2$:
$$\left\{ \begin{array}{l}
x=s^2t^5\\
y=s^7\\
z=s^7+s^6t+s^3t^4+st^6+t^7
\end{array}
\right..
$$
The scheme $X_3$ is empty. The scheme $X_2$ has support at 13 points and has length 15. Let $I_0 = I_{X_2}$, and $I_m = I_{m-1}: I_{\mathcal{C}_2}$. It is not hard to check that $I_m$ defines a scheme of length $15-m$, for $m=0,1,2,3$, and that $I_m = I_3$ for $m\geq 4$.  Hence $X_2$ is made of 12 simple points outside $\mathcal{C}_2$ and at another point, $R\in \mathcal{C}_2$, it has $\length_R(X_2)=3$. So $C$, by  Corollary \ref{doublepts}, has an $A_6$ cusp and 12 ordinary nodes.  
\end{ex}

\begin{ex}\label{cuspidal5ic} Consider the following quintic $C\subset\mathbb{P}^2$:
$$\left\{ \begin{array}{l}
x=s^2t^3\\
y=s^5\\
z=s^5+s^4t+t^5
\end{array}
\right..
$$
The scheme $X_3$ is empty. The scheme $X_2$ has support at 5 points and length 6. The point $R=(1:0:0) \in X_2$ is the only one on $\mathcal{C}_2$ and it has has $\length_R(X_2)=2$. Therefore $C$ has 4 nodes $A_1$ and an $A_4$ cusp, by  Corollary \ref{doublepts}.
\end{ex}

\bigskip

Sometimes it is difficult to find the value of $\length_R(X_2)$ for each $R\in X_2$. Moreover we are also interested in determining the coordinates of the singular points of $C$ in the plane. Therefore we will define a procedure (similar to what we did in Algorithm \ref{Algorithm3True}) which might allow us to determine the structure of the double points of a given $C$ by using Proposition \ref{2mtom}, and also get their coordinates from their preimages on $\PP^1$. Such algorithm would surely give an answer if the following conjecture (suggested by many examples) were true:

\begin{conjecture}\label{multiplelines} Let $C,{\bf f}, X_2$ be as in Section \ref{Preliminaries}. Let $C$ have only double points as singularities, let $\Sing(C)=\{P_1,\dots,P_t\}$ and $(X_2)_1,\dots, (X_2)_t$ the corresponding subschemes of $X_2$. Then
\begin{itemize}
\item $(X_2)_i$ is curvilinear and $\length (X_2)_i=\delta _{P_i}$;
\item if $P_i$ is an $A_{2m_i -1}$ singularity, then $L_i:=p_2p_1^{-1}((X_2)_i)$ cuts two curvilinear schemes of length $m_i$ on $C_n$, such that their image under $\pi$ is a curvilinear subscheme of length $m_i$ of $C$;
\item if $P_i$ is an $A_{2m_i}$ singularity, then $L_i:=p_2p_1^{-1}((X_2)_i)$ is tangent to $C_n$ and cuts a curvilinear scheme of length $2m_i$ on $C_n$, such that its image under $\pi$ is a curvilinear subscheme of length $m_i$ of $C$. 
\end{itemize}
\end{conjecture}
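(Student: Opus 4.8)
The plan is to reduce this global, determinantal statement to a purely local computation at each preimage of $P_i$ on $\PP^1$, and then feed the output into Theorem \ref{2mtom}. The length assertion $\length(X_2)_i=\delta_{P_i}$ is already Proposition \ref{SevVar}, so the genuine content is (i) the \emph{curvilinearity} of $(X_2)_i$ and (ii) the description of $L_i=p_2p_1^{-1}((X_2)_i)$ and of its trace on $C_n$. The starting observation is that, since $\mathbf{f}=\pi\circ\nu_n$, a point $R\in\PP^2=\PP(K[s,t]_2)$ representing the divisor $Q_1'+Q_2'$ on $\PP^1$ lies in $X_2$ exactly when the secant line $\langle\nu_n(Q_1'),\nu_n(Q_2')\rangle$ meets $\Pi$, i.e. when $\mathbf{f}(Q_1')=\mathbf{f}(Q_2')$. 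Thus, away from $\mathcal{C}_2$, the scheme $X_2$ is locally identified near $R_i$ with the branch-meeting scheme $\{(u_1,u_2):\mathbf{f}(Q_1+u_1)=\mathbf{f}(Q_2+u_2)\}$, while along $\mathcal{C}_2$ the $(n+1)$-minors of $M_2$ should provide the correct extension of this condition across the diagonal, as a divided-difference locus.

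First I would make precise that this local scheme is an analytic invariant of the germ $(C,P_i)$. A biholomorphism of germs $(C,P_i)\cong(\Gamma_s,p)$ (the one used in the proof of Theorem \ref{2mtom}, via \cite{BS}) lifts to isomorphisms of the branch germs at the preimages on the normalization, and the condition $\mathbf{f}(Q_1+u_1)=\mathbf{f}(Q_2+u_2)$ is carried to the analogous one for the normalization of the normal form; hence the off-diagonal part of $(X_2)_i$ may be computed on $\Gamma_{2m-1}$ or $\Gamma_{2m}$. On $\Gamma_{2m-1}$, with branches $y=x^m$ and $y=-x^m$, the equations $u_1=u_2$ and $u_1^m+u_2^m=0$ give (in characteristic zero) the ideal $(u_1-u_2,\,u_1^m)$, a curvilinear scheme of length $m$; on $\Gamma_{2m}$, using $\phi(t)=(t^2,t^{2m+1})$ and the divided differences $(a^2-b^2)/(a-b)$ and $(a^{2m+1}-b^{2m+1})/(a-b)$ in the symmetric coordinates $\sigma_1,\sigma_2$, one finds the ideal $(\sigma_1,\,\sigma_2^m)$, again curvilinear of length $m$ and supported on $\mathcal{C}_2$. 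This yields the first bullet and is consistent with Corollary \ref{doublepts}.

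For the remaining two bullets I would track the secant divisor $D_R\subset C_n$ as $R$ ranges over the curvilinear scheme $(X_2)_i$, using the projective-bundle structure $p_1\colon Y\to\PP^k$: the fibre $p_1^{-1}((X_2)_i)$ is a one-dimensional family over a length-$m$ base, and $L_i=p_2p_1^{-1}((X_2)_i)$ is the corresponding thickening of the secant (resp. tangent) line, whose trace on $C_n$ is the union of the divisors $D_R$. In the $A_{2m-1}$ case the local model $(u_1-u_2,u_1^m)$ forces $Q_1+u_1$ and $Q_2+u_2$ to sweep out $mQ_1|_{C_n}$ and $mQ_2|_{C_n}$ respectively, so $L_i\cap C_n=mQ_1|_{C_n}\cup mQ_2|_{C_n}$; in the $A_{2m}$ case the model $(\sigma_1,\sigma_2^m)$ (that is, $b=-a$ with $a^{2m}=0$) forces the two points to sweep out the single length-$2m$ scheme $2mQ|_{C_n}$, and $L_i$ becomes tangent to $C_n$. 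Applying $\pi$ and invoking Theorem \ref{2mtom}, parts \eqref{aii} and \eqref{bii}, the image of these schemes is a curvilinear subscheme of $C$ of length $m$, which is exactly what the last two bullets assert.

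The hard part will be the control of scheme structure at two places. First, one must prove that the ideal of $X_2$ cut out globally by the minors of $M_2$ localizes, at a point of $\mathcal{C}_2$, to \emph{precisely} the divided-difference ideal above, and not merely up to support: across the diagonal the naive condition $\mathbf{f}(Q_1+u_1)=\mathbf{f}(Q_2+u_2)$ absorbs the whole diagonal and must be replaced by its derivative version, and one has to check that the determinantal structure supplies exactly this with no embedded component. Second, one must justify that $L_i\cap C_n$ carries no extra length beyond $mQ_1|_{C_n}\cup mQ_2|_{C_n}$ (resp. $2mQ|_{C_n}$), i.e. that the family of secant divisors is flat of the expected length over the curvilinear base; since $p_2$ contracts the fibres of $Y$ lying over secant spaces that meet $\Pi$, this flatness is not automatic, and I expect it to be the genuine obstruction that has so far kept the statement at the level of a conjecture.
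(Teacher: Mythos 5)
The first thing to say is that the paper does not prove this statement: it is explicitly presented as a conjecture, ``suggested by many examples,'' and the surrounding text (Proposition \ref{doubleptsproj}) only asserts that Algorithm \ref{Algorithm4} would always terminate \emph{if} the conjecture were true. So there is no proof in the paper to compare against, and the question is whether your proposal actually closes the conjecture. It does not, and you say so yourself in your final paragraph. Your local computations are plausible and consistent with what the paper does prove: the length count matches Proposition \ref{SevVar}, the divided-difference ideals $(u_1-u_2,\,u_1^m)$ and $(\sigma_1,\,\sigma_2^m)$ on the normal forms $\Gamma_{2m-1}$, $\Gamma_{2m}$ are computed correctly and are curvilinear of length $m$, and feeding the trace on $C_n$ into Theorem \ref{2mtom} is the right way to conclude. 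But the two steps you flag as ``the hard part'' are not technical details to be checked later; they are the entire content of the conjecture. The set $X_2$ is cut out by the $(n+1)$-minors of the determinantal matrix $M_2$, a global object depending on the full degree-$n$ parameterization, and nothing in your argument shows that this determinantal ideal localizes at $R_i$ to the divided-difference ideal of the germ $(C,P_i)$ rather than to some thickening of it with the same support and (by Proposition \ref{SevVar}) the same length but a non-curvilinear, e.g.\ fat-point, structure. Length $m$ plus correct support does not imply curvilinear, and curvilinearity is precisely the first bullet of the conjecture.

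The reduction to normal forms is also incomplete as stated: an analytic isomorphism of germs $(C,P_i)\cong(\Gamma_s,p)$ identifies the branch germs on the normalizations, but the scheme $X_2$ is not defined intrinsically from those germs --- it is defined from the rank of $M_2$, i.e.\ from the position of the secant line relative to $\Pi$ inside $\PP^n$. You would first have to prove that the local ring of $X_2$ at $R_i$ depends only on the jets of $\mathbf{f}$ at the preimages of $P_i$ up to the relevant order, and then that this dependence is the divided-difference one; neither is established. Likewise, the claim that $L_i\cap C_n$ is exactly $mQ_1|_{C_n}\cup mQ_2|_{C_n}$ (resp.\ $2mQ|_{C_n}$) requires controlling the scheme-theoretic image $p_2p_1^{-1}((X_2)_i)$ over a non-reduced base, and you correctly note that the needed flatness is not automatic since $p_2$ contracts exactly the fibres in question. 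In short: your proposal is a reasonable and well-organized programme, and its two acknowledged gaps coincide with the reason the authors left the statement as Conjecture \ref{multiplelines}; it should be presented as a reduction of the conjecture to those two scheme-theoretic claims, not as a proof.
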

  
In other words, the above conjecture states that if there is an $A_s$ singularity, its description as in Theorem \ref{2mtom} is given by the scheme $p_2p_1^{-1}(X_2) \cap C_n$ and its projection via $\pi$ on $C$.
 \bigskip
 
 \begin{prop}\label{doubleptsproj}
Let $C,{\mathbf f}$ be as above, and such that $C$ has only double points as singularities. Then its singularities are determined by the following algorithm,
provided that Conjecture 4.9 holds: 
\end{prop}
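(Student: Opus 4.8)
The plan is to exhibit the steps of Algorithm \ref{Algorithm4} and to verify that, under Conjecture \ref{multiplelines}, each step returns the correct datum; the three pillars of the argument are Proposition \ref{SevVar}, Corollary \ref{doublepts} and Theorem \ref{2mtom}. First I would start exactly as in Algorithm \ref{Algorithm3True}: assemble the matrix $M_2$, compute the determinantal ideal $I_{X_2}$ of its maximal minors, and take its radical to obtain the reduced support $Z_2=\Supp(X_2)$. Since $C$ has only double points, each $R\in Z_2$ corresponds to a unique secant or tangent line of $C_n$ contracted by $\pi$ to a double point $P\in\Sing(C)$, and distinct points of $Z_2$ give distinct singular points; thus the support already enumerates $\Sing(C)$.

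Next, for every $R\in Z_2$ I would isolate the local component $(X_2)_R$ of $X_2$ at $R$ (e.g. by localizing $I_{X_2}$ at the maximal ideal of $R$, or by the iterated colon construction $I_m=I_{m-1}:I_{\mathcal{C}_2}$ already used in the worked examples) and record its length $m:=\length_R(X_2)$. By Proposition \ref{SevVar} this length equals $\delta_P$, hence it is exactly the integer $m$ appearing in the classification. To read off the analytic type I would then evaluate the single polynomial $4x_0x_2-x_1^2$ at $R$: by Corollary \ref{doublepts}, $R\notin\mathcal{C}_2$ forces $P$ to be an $A_{2m-1}$ singularity, while $R\in\mathcal{C}_2$ forces an $A_{2m}$ singularity. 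This settles the determination of the type of every double point, and note that this part of the argument is entirely unconditional.

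The remaining task is to produce the coordinates of each $P$, and here Conjecture \ref{multiplelines} enters. For each $R$ I would form the scheme $L_R:=p_2\bigl(p_1^{-1}((X_2)_R)\bigr)\subset\PP^n$, supported on the contracted secant (or tangent) line, intersect it with the rational normal curve $C_n$, and pull the resulting $0$-dimensional scheme back to $\PP^1$ through $\nu_n$; applying $\mathbf{f}$ to the supporting point(s) then yields $P$. Granting the conjecture, $L_R\cap C_n$ is curvilinear of length $2m$ (supported at one point, in the $A_{2m}$ case) or splits into two curvilinear schemes of length $m$ (in the $A_{2m-1}$ case), and its $\pi$-image is a curvilinear scheme of length $m$ on $C$. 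This is precisely the configuration characterised in Theorem \ref{2mtom}, so the preimages computed on $\PP^1$ are the correct ones and the value of $m$ read from $L_R\cap C_n$ agrees with the local length extracted above. Composing $\mathbf{f}$ with these preimages, as in Steps \ref{Alg3Step7}--\ref{Alg3Step8} of Algorithm \ref{Algorithm3True}, gives the plane coordinates (possibly numerically).

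I expect the genuine obstacle to be exactly the step that invokes Conjecture \ref{multiplelines}: the type-determination part rests only on Proposition \ref{SevVar} and Corollary \ref{doublepts} and is therefore secure, but the guarantee that $p_2p_1^{-1}((X_2)_R)\cap C_n$ is curvilinear of the predicted length, and that its projection is curvilinear of length $m$, is not available in general, which is why the correctness of the coordinate-extraction step is only conditional. A secondary, purely computational, difficulty is the reliable separation of the local components $(X_2)_R$ when several singular points are represented by numerically close points of $\PP^2$, but this is a matter of symbolic primary decomposition rather than of principle.
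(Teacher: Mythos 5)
Your argument is correct, and it leans on the same three ingredients the paper uses (Proposition \ref{SevVar}, Corollary \ref{doublepts}, Theorem \ref{2mtom}), but it distributes the logical weight differently from the paper's own (very terse) proof. The paper simply observes that Algorithm \ref{Algorithm4} runs like Algorithm \ref{Algorithm3True} while carrying the non-reduced structure of $X_2$, that whenever the algorithm reaches the end its conclusions are justified by Theorem \ref{2mtom}, and that Conjecture \ref{multiplelines} is what guarantees the two ``if'' checks (the shape of the divisor $\Gamma$ on $\PP^1$ and the curvilinearity of the projected scheme $S'$ at each $P_j$) never abort the run; in Algorithm \ref{Algorithm4} as written, \emph{both} the type determination and the coordinates are read off from $\Gamma$ and $S'$, so both are conditional. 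You instead determine the type $A_{2m-1}$ versus $A_{2m}$ unconditionally, by extracting the local length $\length_R(X_2)=\delta_P$ and testing membership in $\mathcal{C}_2$ via Corollary \ref{doublepts}, reserving the conjecture for the coordinate/curvilinearity step. That is a legitimate and arguably cleaner division, and it matches remarks the paper makes just before stating the conjecture --- but note that the paper deliberately does \emph{not} build the local-length computation into Algorithm \ref{Algorithm4}, precisely because isolating $\length_R(X_2)$ at each point of the support can be computationally awkward; the algorithm is designed to replace that local analysis by the global data of $\Gamma$ and $S'$, at the price of needing the conjecture for the type as well. So your proof establishes the proposition for a mild variant of the procedure; to match the statement literally you should add the observation that, granting the conjecture, the divisor $\Gamma$ has exactly the form \eqref{Alg4Step5} and $S'$ is curvilinear of length $m_j$ at each $P_j$, so the algorithm's own checks pass and its type assignment (via Theorem \ref{2mtom}) agrees with the one you derived from Corollary \ref{doublepts}. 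This is a small bridging step, not a gap in the mathematics.
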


\begin{algorithm}[H]\caption{Coordinates of Singularities of $C$}\label{Algorithm4}

 \textbf{Input}: $M_i\in \mathbb{C}^{(n-k+4)\times (n+1)}$, $i=2, \ldots , n-1$ (see \eqref{trueMk}).\\
\textbf{Output}: $\Sing(C)$.

\begin{boxedminipage}{125mm}

    \begin{algorithmic}[1]

\STATE\label{Alg4Step1} Same as Step \ref{Alg2Step1} of Algorithm \ref{Algorithm2True}.
\STATE Run Algorithm \ref{Algorithm2True} from Step \ref{Alg2Step13} to Step \ref{Alg2Step3}  but starting with $I_{X_2}\subset K[x_0,x_1,x_2]$.  
Get the ideal $I_L\subset K[z_0,\ldots,z_n]$ which defines a scheme $L$ in $\PP^n$ supported on the lines $L_1,\ldots,L_h$ ($h=$  cardinality of $\Supp X_2=$  cardinality of $\Sing(C)$), parameterized by $X_2$. These lines are exactly the secant or tangent lines to $C_n$ which get contracted via $\pi$ to a singular point $P\in C$. The ideal $I_L$ gives a scheme supported at these lines which has degree equal to $\length_{R_j}X_2$ at $L_j$, if $R_j$ parametrizes $L_j$;
\STATE\label{Alg4Step3} Run Algorithm \ref{Algorithm3True} from Step \ref{Alg2Step2}
to Step \ref{Alg3Step7} using $I_L$ (instead of $I_{L_k}$ of Step \ref{Alg3Step4} in Algorithm \ref{Algorithm3True}). Get, by approximation,  the  zeroes of $F$ (instead of $F_k$ of Step \ref{Alg3Step7} in Algorithm \ref{Algorithm3True}). They give  a subscheme $\Gamma$ of $\PP^1$ which is $\nu_n^{-1}(C_n\cap L)$. 
 \STATE Write $\Gamma$ as a divisor.

 \IF {$\Gamma$  is  of type 
 \begin{equation}\label{Alg4Step5} m_1B_{1}+m_{1}B'_{1} + \cdots + m_{c}B_c + m_cB'_c + 2m_{c+1}\mathbf{B}_{c+1} + \cdots + 2m_{h-c}\mathbf{B}_{h-c}\end{equation} where $0\leq c\leq h$ and $(B_j,B'_j)$ correspond to a secant $L_j$, while each $\mathbf{B}_j$ to a tangent $L_j$ and $\sum m_j = {n-1 \choose 2}$,}
 
 \STATE{go to Step \ref{Alg4old8},}
 \ELSE \STATE {The algorithm stops here with no answer.}
\ENDIF

\STATE\label{Alg4old8} Run Algorithm \ref{Algorithm2True} from Step \ref{Algo2Step8} to Step \ref{Alg2Step5} starting with $I_S$ (instead of $I_L$ of Step \ref{Algo2Step8} in Algorithm \ref{Algorithm2True}). Get 
 the elimination ideal $
 I_N\subset K[w_0,w_1,w_2]$ 
which is the  projection $S'$ of $S$ into $\PP^2$.
  \STATE  Compute $\Supp(S')=\{P_1,\ldots ,P_h\}$. This is  $\Sing(C)$;
\IF {$P_j$ corresponds either to $\mathbf{B}_j$ or to $(B_j,B'_j)$ in \eqref{Alg4Step5}, and $S'$ is curvilinear of length $m_j$ at $P_j$}
\STATE {$P_j$ is either an $A_{2m_j}$ or, respectively, an $A_{2m_j-1}$ singularity and go to Step \ref{Alg4Last};}
\ELSE \STATE {The algorithm stops without an answer;}
\ENDIF
 \STATE\label{Alg4Last} By applying ${\mathbf{f}}$ either to the $\mathbf{B}_j$'s or to the $(B_j,B'_j)$'s one gets the (maybe approximated) coordinates of points $P_1,\ldots,P_h$ in $\Sing(C)$.

    \end{algorithmic}
\end{boxedminipage}

\end{algorithm}

\begin{proof}
Algorithm \ref{Algorithm4} works in the same way as Algorithm \ref{Algorithm3True} does, but it uses also the non-reduced structure of $X_2$, following the idea of Conjecture \ref{multiplelines}. If the algorithm can run to the end, it works by Proposition \ref{2mtom}. If Conjecture \ref{multiplelines} is true, the algorithm will always work, otherwise there could be cases where it just stops without results. 
\end{proof}

  \bigskip

 \begin{ex}\label{quintic} Consider the quintic curve in Example \ref{cuspidal5ic}, $C\subset\mathbb{P}^2$:
$$\left\{ \begin{array}{l}
x=s^2t^3\\
y=s^5\\
z=s^5+s^4t+t^5
\end{array}
\right..
$$
We have seen that,  by Proposition \ref{SevVar}, $C$ has 4 nodes $A_1$ and an $A_4$ cusp. Let us check that it is really so using Algorithm \ref{Algorithm4}.  If we run Steps \ref{Alg4Step1}  to \ref{Alg4Step3}, we get the polynomial $F=-s^{12} - s^8t^4 – s^4t^8$, whose zeros, as a divisor in $\PP^1$, give: $B_1+B_1'+B_2+B_2'+B_3+B_3'+B_4+B_4'+4\mathbf{B}_5,$, where: 
$$B_1=(e^{i\frac{\pi}{6}}:1),   B_1'=(e^{i\frac{5\pi}{6}}:1); B_2=(e^{i\frac{2\pi}{3}}:1), B_2'=(e^{-i\frac{2\pi}{3}}:1);$$ $$B_3=(e^{i\frac{7\pi}{6}}:1), B_3'=(e^{-i\frac{\pi}{6}}:1); B_4=(e^{-i\frac{\pi}{3}}:1), B_4'=(e^{i\frac{\pi}{3}}:1)$$ 
which, by applying $\mathbf{f}$ yield, respectively, the four nodes in $\PP^2$: $P_1=(1:i:1+i); P_2=(1:1:0); P_3=(2:1+i\sqrt{3}:2); P_4=(1:-1:2)$. Then we have $\mathbf{B}_5=(0:1)$, and $P_5 = \mathbf{f}(\mathbf{B}_5)=(0:0:1)$. 
Running Steps \ref{Alg4old8}  to \ref{Alg4Last} of Algorithm \ref{Algorithm4}, one gets the ideal
 
$(-x_0^2x_2 - x_0x_1x_2 - 2x_1^2x_2 + x_1x_2^2,\ -x_0^2x_1 + x_1^3 - 2x_1^2x_2 + x_1x_2^2,$ 

$\qquad \qquad \qquad \qquad \qquad \qquad -x_0^3 + x_1^3 - x_1^2x_2, x_0x_2^2 - x_1^3 - x_0x_1x_2,\ 4x_1^3x_2 + x_0x_1x_2^2 - 3x_1^2x_2^2 + x_1x_2^3)$

\noindent which defines a scheme given by the simple points $P_1,\ldots ,P_4$ and is curvilinear of lenght 2 at $P_5$, thus showing that $C$ has a cusp $A_4$ at $P_5$.
 
\end{ex}

\section{The real case: hidden singularities.}

 When $f_0,f_1,f_2$ are forms in $\RR[s,t]_n$, let us consider the image of the parameterization, $C_\RR \subset \PP_\RR^2$. We want to check whether our method of studying singularities of $C$ still gives results when working on real rational curves. Real rational curves are very much studied in applications, for general reference see e.g. \cite{refSWP}.   Here when we use the notation of the previous section: $C,\ C_n,\ \Pi,\ X_2$, etc. we will assume that we are working on $\CC$.

As above, we will assume that  $(f_0:f_1:f_2)$ is a proper parameterization, i.e. that it is generically one to one and the $f_i$'s do not have non-constant common factors. 

The whole procedure to construct the schemes $X_2,X_3,\ldots ,X_{n-1}$ works also over $\RR$, but in this case not all the points of the space $\PP^k_\RR$ do parameterize $(k-1)$-spaces $k$-secant to $C_{n,\RR}\subset \PP^n_\RR$, but only the ones which correspond to forms in $\RR[s,t]_n$ which are a product of linear factors (they form a semi-algebraic set, defined by equations and inequalities in $\PP^k_\RR$). Here we will consider the real rational normal curve $C_{n,\RR}\subset \PP_\RR^n$ which is defined by $\nu_n:\PP^1_\RR \rightarrow \PP^n_\RR$ as before. Notice that the real secant variety $\sigma_2(C_{n,\RR})$, when it does not fill $\PP^n_\RR$, is no longer an algebraic variety, but just a semi-algebraic set. 

This is not the only problem in working with the real case. We have that $C_\RR$ itself might not be an algebraic set. Once we find an implicit equation $F=0$ for $C_\RR$, there may be points satisfying such equation which are not in $C_\RR$, i.e. there may be isolated points (also called {\it acnodes}) of the curve, which are not contained in the part parameterized by $(f_0:f_1:f_2)$ (so acnodes will be points with real coordinates coming from non-real points in the parameterization $ \mathbf{f} : \PP^1_\CC \rightarrow \PP^2_\CC$). Another possibility is that an infinitesimal neighborhood of some point of $C_\RR$ is missing; let us see this more in detail.

First observe that $F$ is irreducible over $\RR$; in fact, let $C$ be the complex curve given by the parameterization of $C_\RR$ read on the complex field $\CC$; $C$ is rational, hence integral, so the polynomial $F$ (which is the implicit equation for $C$ on $\CC$) is irreducible over $\CC$ and hence is irreducible over $\RR$, too. Now denote by $\overline C_\RR$ the curve in $\PP^2_\RR$ associated to the ideal $(F)$; $F$ being irreducible, this is an integral curve on $\RR$. Sometimes  $\overline C_\RR =  C_\RR$, but sometimes they are different; in  example \ref{oneacnode}  the two sets differs for the point $P$, which is an acnode; in example \ref{realtriple} they are not different as sets, but $\overline C_\RR$ contains the second infinitesimal neighborhood of the point $P$, while $ C_\RR$, considered as a differential curve, is smooth. 

\par In both examples $P$ is a singular point for $C$ (with complex tangents), hence we will consider it as a singularity for $\overline C_\RR$, too. Such points are not immediately detected via the parameterization, hence our schemes $X_k$ could be a good way to find them.

The following examples may clarify what can happen in the real case.

\begin{ex}\label{oneacnode} 
Let $C_\RR$ be the cubic defined by the parameterization:
$$\left\{ \begin{array}{l}
x=t^3+s^2t\\
y=-s^3-st^2\\
z=-t^3
\end{array}
\right..
$$

\noindent we have that an implicit equation is $F=x^3+x^2z+y^2z=0$; and that $P=(0:0:1)$ is a singular point for $F=0$, but $P\notin C_\RR$, i.e. $\overline C_\RR = C_\RR\cup P$. If we compute $X_2 = X_{2,\RR}$, we find that it is given by the reduced point $(1:0:1)\in \PP^2_\mathbb{R}$, which corresponds to the form $s^2+t^2\in \RR[s,t]_2$, which is irreducible, hence it does not give a secant line of $C_{3,\RR}$. By using equation \eqref{eqn1}: $\sum_{i=0}^k x_iz_{i+j} = 0, \; \; j=0, \ldots , n-k$, we get that the corresponding line $L_P\subset \PP^3_\RR$ is given by $\{z_0+z_2=0; z_1+z_3=0\}$, which intersects $C_3$ (in $\PP^3_\CC$) in the points: $Q_1=(1:i,-1:-i)$ and $Q_2=(1:-i,-1:i)$; and we have that $\pi(Q_1)=\pi(Q_2)=P$.
Actually, by running Algorithm \ref{Algorithm3True}, one finds the points $(1:i)$ and $(-1:i)$ in $\PP^1_\mathbb{C}$, whose images in $C$ via $\mathbf{f}$ coincide and give the double point (acnode) $P$. 

\end{ex}

\medskip

The following example shows another kind of ``~hidden~'' singularity in a real curve; here a triple point appears as a simple one, since two of its branches are complex.

\begin{ex}\label{realtriple} 
Let $C_\RR$ be the quartic curve defined by the parameterization:
$$\left\{ \begin{array}{l}
x=s^4+s^2t^2\\
y=-s^3t-st^3\\
z=-s^4-t^4
\end{array}
\right..
$$

 If we compute $X_3 = X_{3,\RR}$, we find that it is given by the reduced point $R=(1:0:1:0)\in \PP^3_\mathbb{R}$, which corresponds to the form $s^3+st^2=s(s^2+t^2)\in \RR[s,t]_3$, whose second factor is irreducible, hence $H_R$ will be a trisecant plane of $C_4\subset \PP^4_\mathbb{C}$, but its real part will only give a plane which intersects $C_{4,\RR}$ in one point. The scheme $X_2\subset  \PP^2_\mathbb{C}$ is made of three simple points (corresponding to the three secant lines in $H_R$) and only one of them, $R_1=(1:0:1)$,  has real coordinates, and it corresponds to the secant line containing the two complex points of $H_R\cap C_4$. If we run Algorithm \ref{Algorithm3True}, we find the polynomial $F = s^3 + st^2$ which gives the points $(0:1)$, $(1:i)$ and $(-1:i)$ in $\PP^1_\mathbb{C}$, whose image in $C$ via $\mathbf{f} $  gives the triple point $P=(0:0:1)\in \overline{C}_\RR$. Actually, the implicit equation of $\overline{C_\RR}$ is $x^4 + y^4 + x^3z + xy^2z=0$, whose first and second  derivatives are zero at $P$ (hence $P$ is a triple point for $C$).  Considered as a curve from a differential point of view, $C_\RR$ is smooth.  

\end{ex}

 The two examples show how we can use Algorithm \ref{Algorithm3True} (or Algorithm \ref{Algorithm4}) to study the hidden singularities in the real case. The algorithm gives a form $F\in \CC[s,t]$ which gives the points in $\PP^1_\CC$ whose image in $C$ gives the singularities; by checking when more points in $\PP^1_\CC$ have the same (real) image into $\PP^2_\RR$, we can find acnodes (isolated singularities) or ``~hidden singularities~" for $C_\RR$, as in the two examples above. 
 
 Other examples showing how our algorithms work in the real case are Example \ref{quintic} and Examples \ref{3.22} and \ref{3.23}. In Example  \ref{quintic}, when we consider the curve $C_\RR$, two of the nodes we found have real coordinates, but they are acnodes, since they come (via $\mathbf{f})$ from two pairs of complex points in $\PP^1$, so $\overline{C}_\RR$ has two isolated points, and $C_\RR$ is singular only at $P_5$ (which remains an $A_4$ cusp).  In the same way,  Examples \ref{3.22} and \ref{3.23} present two acnodes and one acnode, respectively.

\bigskip
ACKNOWLEDGMENTS

We would like to thank the two anonymous referees for their kind words about the paper and for their help to make the paper more readable and to correct several math inadequacies. Moreover we like to thank Edoardo Sernesi for useful talks.


\end{document}